\newtheorem{theorem}{Theorem}[section]
\newtheorem{proposition}[theorem]{Proposition}
\newtheorem{lemma}[theorem]{Lemma}
\newtheorem{definition}[theorem]{Definition}
\newtheorem{remark}[theorem]{Remark}
\newtheorem{example}[theorem]{Example}
\newlist{cvdesc}{description}{1}
\setlist[cvdesc]{nosep,
labelindent=0pt,
labelwidth=2.8cm,
labelsep*=0.2cm,
leftmargin=cm,
font=\normalfont,
align=right}
\newlist{compactenum}{enumerate}{3}
\setlist[compactenum]{nosep,itemsep=2pt,topsep=2pt,labelindent=1em,leftmargin=1em}
\setlist[compactenum,1]{label=\textbullet}
\def\NN{{\mathbb N}}
\def\ZZ{{\mathbb Z}}
\def\QQ{{\mathbb Q}}
\def\KK{{\mathbb K}}
\def\char{{\rm{char}}}
\def\Aut{{\mathrm{End}}}
\def\GL{{\mathrm{GL}}}
\def\S{{\mathrm{S}}}
\def\Gr{Gr\"obner}
\def\lm{{\mathrm{lm}}}
\def\lc{{\mathrm{lc}}}
\def\LM{{\mathrm{LM}}}
\def\HF{{\mathrm{HF}}}
\def\HS{{\mathrm{HS}}}
\def\H{{\mathrm{H}}}
\def\mHS{{\mathrm{mHS}}}
\def\Sol{{\mathrm{Sol}}}
\def\mSol{{\mathrm{mSol}}}
\def\lph{{\mathrm{lpHilbert}}}
\def\mi{{\mathcal{MI}}}
\def\O{{\mathcal{O}}}
\def\N{{\mathcal{N}}}
\def\C{{\mathbf{C}}}
\def\vH{{\mathbf{H}}}
\def\S{{\mathrm{S}}}
\def\bO{\bar{\mathcal{O}}}
\def\bC{\bar{\mathbf{C}}}
\def\bvH{\bar{\mathbf{H}}}
\def\bA{\bar{A}}
\def\bp{\bar{p}}
\def\tdeg{{\mathrm{tdeg}}}
\def\balpha{{\bar{\alpha}}}
\def\bdelta{{\bar{\delta}}}
\def\bDelta{{\bar{\Delta}}}
\def\btheta{{\bar{\theta}}}
\def\bx{{\bar{x}}}
\newcommand{\singular}{{\sc Singular}\xspace}
\newcommand{\magma}{{\sc Magma}\xspace}
\newcommand{\maple}{{\sc Maple}\xspace}
\def\e{{\epsilon}}
\begin{document}

\title[Multigraded Hilbert series $\ldots$]
{Multigraded Hilbert Series of noncommutative modules}

\author[R. La Scala]{Roberto La Scala$^*$}

\author[S.K. Tiwari]{Sharwan K. Tiwari$^{**}$}

\address{$^*$ Dipartimento di Matematica, Universit\`a di Bari,
Via Orabona 4, 70125 Bari, Italy}
\email{roberto.lascala@uniba.it}

\address{$^{**}$ Scientific Analysis Group, Defence Research
\& Development Organization, Metcalfe House, Delhi-110054, India}
\email{shrawant@gmail.com}

\thanks{The author R.L.S. acknowledges the support of the University of Bari.
The author S.K.T. acknowledges the support of the University of Kaiserslautern
and thanks the German Academic Exchange Service (DAAD) for providing the
financial support}

\subjclass[2010] {Primary 16Z05. Secondary 16P90, 05E05}

\keywords{Hilbert series, noncommutative modules, regular languages,
symmetric functions}

\maketitle

\begin{quotation}
{\em Per Maria Jos\'e}
\end{quotation}

\begin{abstract}
In this paper, we propose methods for computing the Hilbert series of
multigraded right modules over the free associative algebra. In particular,
we compute such series for noncommutative multigraded algebras. Using
results from the theory of regular languages, we provide conditions when
the methods are effective and hence the sum of the Hilbert series is a
rational function. Moreover, a characterization of finite-dimensional algebras
is obtained in terms of the nilpotency of a key matrix involved in the computations.
Using this result, efficient variants of the methods are also developed for
the computation of Hilbert series of truncated infinite-dimensional algebras
whose (non-truncated) Hilbert series may not be rational functions.
We consider some applications of the computation of multigraded Hilbert series
to algebras that are invariant under the action of the general linear group.
In fact, in this case such series are symmetric functions which can be decomposed
in terms of Schur functions. Finally, we present an efficient and complete
implementation of (standard) graded and multigraded Hilbert series that has
been developed in the kernel of the computer algebra system \singular.
A large set of tests provides a comprehensive experimentation for the proposed
algorithms and their implementations.
\end{abstract}

\section{Introduction}

The concept of what is nowadays known as Hilbert (or Hilbert-Poincar\'e) series,
was first introduced in the 19th century in the context of finitely generated
commutative algebras over a field $\KK$. This is the generating series
of the Hilbert function which relates each nonnegative integer $d$ to the $\KK$-linear
dimension of the graded or filtered component of degree $d$ of the algebra.

A fundamental property of a finitely generated commutative algebra is that the sum
of its Hilbert series is always a rational function allowing a finite description
of such an invariant. This was proved by Hilbert himself in 1890 by introducing
the other fundamental notion of a free resolution. Indeed, to compute
a resolution, that is, a complete chain of syzygies for the generators
of the ideal of relations of the algebra is usually much more involved than
to determine the Hilbert series. This was essentially observed in 1927 by Francis
Sowerby Macaulay, whose results imply that the Hilbert series of a commutative algebra
is equal to the series of a corresponding monomial algebra, that is, an algebra
whose generators are related by monomials. In modern terminology, such a monomial
algebra is defined by the leading monomial ideal, with respect to a suitable monomial
ordering, of the ideal of relations of the algebra under consideration. In other words,
one has to compute a \Gr\ basis of such an ideal. In fact, it was only around the
mid-1990s when the computer algebra community was able to propose and implement
many efficient algorithms for the computation of the Hilbert series of commutative
algebras (see, for instance, \cite{BS,Bi}).

For noncommutative structures, the first result about the rationality of the Hilbert
series of a finitely presented monomial algebra is due to Govorov \cite{Go} and
dates back only to 1972. Another fundamental contribution was given by Ufnarovski
\cite{Uf1} who developed a graph theoretic method to study the asymptotic behaviour
of the Hilbert functions of finitely presented monomial algebras and also to compute
the corresponding rational Hilbert series. We mention that in the noncommutative
literature, Hilbert functions and Hilbert series are also called ``growth functions
and growth series''. Clearly, these results of Govorov and Ufnarovski can be immediately
extended to (finitely generated) algebras whose ideals of relations admit a finite \Gr\ basis.
However, the free associative algebra $\KK\langle x_1,\ldots,x_n\rangle$ is not
Noetherian which implies that noncommutative \Gr\ bases are generally infinite sets.

Recently, in \cite{LS2} it has been proved that the rationality of a noncommutative
Hilbert series follows from the condition that the (possibly infinitely generated)
leading monomial ideal of relations of the algebra is a ``regular language'' in the sense
of the theory of formal languages. In fact, this result has been stated in the more
general context of finitely generated right modules over
$\KK\langle x_1,\ldots,x_n\rangle$. Moreover, in that paper one finds an iterative
method for the computation of the Hilbert series which is proved to be optimal with
respect to the number of iterations by means of standard results in automata theory.
Since this method is based on the repeated application of the (right) colon ideal
operation, from a historical perspective we can say that it generalizes and refines
the methods that were developed for commutative Hilbert series, indicating that
the worlds of commutative and noncommutative algebra are not far apart.

In \cite{LS2} the only implementation of the proposed algorithms was an
experimental one in \maple. Then, our first goal in the present paper is to develop
a fast and well-tested implementation of noncommutative Hilbert series
in the kernel of \singular \cite{DGPS} (see Section \ref{timings} for details).
We remark that this is the only general implementation
of these series up to now. In fact, the library \textsf{fpadim.lib} of \singular
provides the computation of the Hilbert function of a finite-dimensional algebra
by means of the enumeration of normal words. In our tests we propose hence
the comparison of the two methods for the finite-dimensional case.

Our second goal is to extend the theory and the methods to
multivariate Hilbert series that are defined for multigraded algebras.
It is well known that if an algebra with $n$ generators over a field $\KK$
of characteristic zero is also a (polynomial) module for the action of the
general linear group $\GL_n(\KK)$, then it is a multigraded algebra.
An algebra is a $\GL_n(\KK)$-module when the ideal of relations of its
$n$ generators is invariant under all invertible linear substitutions of them.
This is the case for many universal enveloping algebras and also for the algebras
that are defined by ``T-ideals'', that is, ideals that are invariant under
all polynomial substitutions of the generators.
For $\GL_n(\KK)$-invariant algebras, the multigraded Hilbert series are in fact
symmetric functions. The decomposition of these functions in terms of Schur functions
provides all essential information (multiplicities) about the decomposition
of the algebra in terms of its simple $\GL_n(\KK)$-submodules.
For the purposes of representation theory, this is of course a very important task.
The general theory for the multigraded Hilbert series of finitely generated
multigraded right modules over $\KK\langle x_1,\ldots,x_n\rangle$ is presented
in Section \ref{general} and \ref{macaulay}. In particular, we prove
Theorem \ref{macth} which is a noncommutative multigraded version
of Macaulay's basis theorem that reduces the computations to the monomial case.
In Section \ref{method} and \ref{regular} we present the methods
by extending the approach that has been introduced in \cite{LS2} for the graded case.
In Section \ref{example} we apply the computation of the multigraded Hilbert
series to an interesting $\GL_n(\KK)$-invariant algebra with the purpose of obtaining
the Schur function decomposition of its series. For the reader's convenience,
a brief review of the very basic theory of polynomial representations
of the general linear group is provided at the beginning of this section.

A last contribution of the present paper consists in developing efficient
variants of the proposed methods that can be applied to truncations of
infinite-dimensional algebras up to some fixed degree. Note that the truncations
yield finite-dimensional algebras and hence their Hilbert series are in fact
polynomials. The motivation for developing such variants is two-fold.
First, there exist non-regular monomial algebras for which the sum of the
corresponding Hilbert series is not available. Thus, polynomial approximations
of these functions may be useful to understand them. Moreover,
since Schur functions are (symmetric) polynomials, there are fast algorithms
to perform the Schur function decomposition for symmetric polynomials but it is
much more difficult to decompose even a rational symmetric function as an infinite
sum of Schur functions. Again, to have approximations of the latter decomposition
may help to understand the complete picture. The truncation methods are
presented in Section \ref{truncation} and applied in Section \ref{example}
and Section \ref{timings}. In Section \ref{truncation} one also finds
a characterization of the finite-dimensionality of a (regular) monomial
algebra in terms of the nilpotency of a key matrix that is involved
in our method. Finally, in Section \ref{conclusion} we draw some conclusions
and suggest further developments of the theory and the methods.

\section{Multigraded modules and their Hilbert series}
\label{general}

Let $\KK$ be any field and let $X = \{x_1,\ldots,x_n\}$ be a finite set.
We denote by $W = X^*$ the free monoid that is freely generated
by $X$, that is, the elements of $W$ are words over the alphabet $X$.
Moreover, we denote by $F = \KK\langle X\rangle$ the corresponding monoid
$\KK$-algebra, that is, $F$ is the (finitely generated) free associative
algebra freely generated by $X$. A {\em monomial} of $F$ is by definition
a word of $W$ and an element of $F$ is called a {\em (noncommutative) polynomial}.
Recall that a standard grading of the algebra $F$ is given by assigning
$\tdeg(w)$ as the length of a word $w\in W$. We call also $\tdeg(w)$
the {\em total degree} of the monomial $w$. Thus, we have that
$F = \bigoplus_{d\in\NN} F_d$ where $F_d$ is the span of the words
$w\in W$ such that $\tdeg(w) = d$. An element $f\in F_d$ is called
a {\em homogeneous polynomial of total degree $d$}.
A standard multigrading of $F$ is defined in the following way.

\begin{definition}
For any $w\in W$, we define $\deg(w) = \balpha = (\alpha_1,\ldots,\alpha_n)$,
where $\alpha_i\in\NN$ is the number of times the variable $x_i$ occurs
in the word $w$. Since $\deg(v w) = \deg(v) + \deg(w)\in\NN^n$ for all $v,w\in W$,
we have an algebra multigrading
\[
F = \bigoplus_{\balpha\in\NN^n} F_{\balpha}
\]
where $F_{\balpha}$ is the subspace of $F$ that is spanned by the words $w\in W$
such that $\deg(w) = \balpha$. Then, an element $f\in F_{\balpha}$ is called a
{\em multihomogeneous polynomial of multidegree $\balpha$}.
\end{definition}

A variant of the above multigrading can be obtained in the following way.
Consider $F$ as the cyclic free right $F$-module generated by $1$.
Fix a multidegree $\bdelta = (\delta_1,\ldots,\delta_n)\in \NN^n$ and define
$\deg'(1) = \bdelta$, that is, $\deg'(w) = \bdelta + \deg(w)$ for each $w\in W$.
Then, we denote by $F[-\bdelta]$ the cyclic free right $F$-module $F$ that is
endowed by the multigrading defined by $\deg'$. By denoting $\balpha \unrhd \bdelta$
when $\alpha_i\geq \delta_i$ for any $i = 1,2,\ldots,n$, we have that
\[
F[-\bdelta]_\balpha =
\left\{
\begin{array}{cl}
F_{\balpha-\bdelta} & \mbox{if}\ \balpha\unrhd \bdelta, \\
0 & \mbox{otherwise}.
\end{array}
\right.
\]
Let now $r > 0$ be any integer and let $F^r$ be the free right $F$-module of rank $r$.
If $\{e_1,\ldots,e_r\}$ denotes the canonical basis of $F^r$, then the elements
of $F^r$ are right linear combinations $\sum_i e_i f_i$ where $f_i\in F$.
A standard right module multigrading of $F^r$ is defined by putting $\deg(e_i) =
\deg(1) = (0,\ldots,0)$, that is, $\deg(e_i w) = \deg(w)$ for all $w\in W$ and
$i = 1,2,\ldots,r$. By using a set of multidegrees $\{\bdelta_1,\ldots,\bdelta_r\}$
we can modify this multigrading of $F^r$ by putting $\deg'(e_i) = \bdelta_i$,
for any $i$. We will denote this multigraded (free) right $F$-module as
$\bigoplus_{1\leq i\leq r} F[-\bdelta_i]$. In fact, an element
$f = \sum_i e_i f_i\in \bigoplus_i F[-\bdelta_i]$ is multihomogeneous
of multidegree $\balpha$ if and only if $f_i\in F[-\bdelta_i]_\balpha$,
for all $i = 1,2,\ldots,r$.

\begin{definition}
Consider a right submodule $M\subset \bigoplus_i F[-\bdelta_i]$. We call $M$ a
{\em multigraded submodule} if $M = \sum_\balpha M_\balpha$ where $M_\balpha =
M\cap \bigoplus_i F[-\bdelta_i]_\balpha$. In this case, one has the quotient
multigraded right module $N = \bigoplus_i F[-\bdelta_i]/M$ where the multihomogeneous
component $N_\balpha$ is isomorphic to $\bigoplus_i F[-\bdelta_i]_\balpha/M_\balpha$.
A finitely generated multigraded right module $N' = \langle g_1,\ldots,g_r \rangle$
where $g_i$ is a multihomogeneous element of multidegree $\bdelta_i$ is clearly
isomorphic to $N$ by the multigraded right module homomorphism
$\varphi:\bigoplus_i F[-\bdelta_i]\to N', e_i\mapsto g_i$ where $M = \ker\varphi$.
\end{definition}

Let $\balpha = (\alpha_1,\ldots,\alpha_n)\in\NN^n$ and put $d = |\balpha| =
\sum_i \alpha_i$. By counting the number of words that have multidegree $\balpha$,
one has that
\[
\dim F_\balpha = \binom{d}{\alpha_1,\ldots,\alpha_n} = \frac{d!}{\alpha_1!\cdots\alpha_n!}
\]
Recall that $(t_1 + \cdots + t_n)^d = \sum_{|\balpha| = d} \binom{d}{\alpha_1,\ldots,\alpha_n}
t_1^{\alpha_1}\cdots t_n^{\alpha_n}$ by the multinomial theorem, which implies that
the multivariate generating series of the function $\balpha\mapsto \dim F_\balpha$
satisfies the following formula
\[
\sum_{\balpha\in\NN^n} \dim F_\balpha\, t_1^{\alpha_1}\cdots t_n^{\alpha_n} =
\sum_{d\in\NN} (t_1 + \cdots + t_n)^d = \frac{1}{1 - (t_1 + \cdots + t_n)}.
\]
Moreover, for any fixed multidegree $\bdelta = (\delta_1,\ldots,\delta_n)$, we have
by definition $F[-\bdelta]_\balpha = F_{\balpha-\bdelta}$ and therefore
\[
\sum_\balpha \dim F[-\bdelta]_\balpha\, t_1^{\alpha_1}\cdots t_n^{\alpha_n} =
\frac{t_1^{\delta_1}\cdots t_n^{\delta_n}}{1 - (t_1 + \cdots + t_n)}.
\]
Next, we will generalize the notions and results above.

\begin{definition}
Let $N = \bigoplus_{\balpha} N_{\balpha}$ be a finitely generated multigraded
right module over $F$. We define the function $\HF(N)(\balpha) = \dim N_\balpha$,
for any $\balpha = (\alpha_1,\ldots,\alpha_n)\in\NN^n$ and the corresponding
multivariate generating series
\[
\HS(N) = \sum_{\balpha\in\NN^n} \dim N_\balpha \, t_1^{\alpha_1}\cdots t_n^{\alpha_n}
\]
We call $\HF,\HS$ respectively the {\em multigraded Hilbert function and multigraded
Hilbert series of the module $N$}.
\end{definition}

Observe that the {\em (graded) Hilbert function of $N$} is defined for any total
degree $d\in\NN$ as
\[
\HF'(N)(d) = \sum_{|\balpha| = d} \HF(N)(\balpha)
\]
In other words, the corresponding {\em (graded) Hilbert series} $\HS'(N)$ is obtained
by identifying in the series $\HS(N)$ all variables $t_i$ ($1\leq i\leq n$) with
a single variable $t$.

We have already obtained explicit formulas for multigraded Hilbert function and
series in the case of free algebras. Of course, these formulas immediately extend
to finitely generated free right modules. Note now that an important property
of the finitely generated free associative algebra $F = \KK\langle X \rangle$
is that it is a {\em free right ideal ring} \cite{Co}, that is, each right
submodule $M\subset F^r$ is in fact a free one of unique rank. Unfortunately,
owing to non-Noetherianity of the algebra $F$, we cannot always assume that $M$
is also finitely generated. Nevertheless, for this case one has the following
formula.

\begin{theorem}
\label{fgformula}
Let $N = \bigoplus_{1\leq i\leq r} F[-\bdelta_i]/M$ be a finitely presented multigraded
right module where $\bdelta_i = (\delta_{i1},\ldots,\delta_{in})$, for all $i$.
Consider $\{g_1,\ldots,g_s\}$ a multihomogeneous free basis of the right submodule
$M\subset \bigoplus_i F[-\bdelta_i]$. If $\bDelta_j = (\Delta_{j1},\ldots,\Delta_{jn})
= \deg(g_j)$ then
\[
\HS(N) = \frac{\sum_{1\leq i\leq r} t_1^{\delta_{i1}}\cdots t_n^{\delta_ {in}} -
\sum_{1\leq j\leq s} t_1^{\Delta_{j1}}\cdots t_n^{\Delta_ {jn}}}{1 - (t_1 + \cdots + t_n)}
\]
In particular, we have that $\HS(N)$ is a rational function with integer
coefficients.
\end{theorem}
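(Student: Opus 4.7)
The plan is to derive the formula from the short exact sequence of multigraded right $F$-modules
\[
0 \longrightarrow M \longrightarrow \bigoplus_{i=1}^{r} F[-\bdelta_i] \longrightarrow N \longrightarrow 0,
\]
exploiting the additivity of multigraded Hilbert series on such sequences. Because the sequence is multihomogeneous, for each $\balpha\in\NN^n$ it restricts to a short exact sequence of finite-dimensional $\KK$-vector spaces in multidegree $\balpha$, so $\dim N_\balpha = \sum_i \dim F[-\bdelta_i]_\balpha - \dim M_\balpha$. Multiplying by $t_1^{\alpha_1}\cdots t_n^{\alpha_n}$ and summing over $\balpha$ gives the identity
\[
\HS(N) \;=\; \HS\Bigl(\bigoplus_{i=1}^{r} F[-\bdelta_i]\Bigr) \;-\; \HS(M).
\]

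For the first term I would invoke the formula already established before the theorem, which yields $\HS(\bigoplus_i F[-\bdelta_i]) = \bigl(\sum_i t_1^{\delta_{i1}}\cdots t_n^{\delta_{in}}\bigr)/\bigl(1 - (t_1+\cdots+t_n)\bigr)$ by additivity over the index $i$. For the second term I would use the hypothesis that $\{g_1,\ldots,g_s\}$ is a multihomogeneous free basis of $M$ with $\deg(g_j) = \bDelta_j$: the map $e_j\mapsto g_j$ then defines an isomorphism of multigraded right $F$-modules $\bigoplus_{j=1}^{s} F[-\bDelta_j] \to M$, to which the same free-module formula applies, producing $\HS(M) = \bigl(\sum_j t_1^{\Delta_{j1}}\cdots t_n^{\Delta_{jn}}\bigr)/\bigl(1 - (t_1+\cdots+t_n)\bigr)$. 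Substituting these two expressions into the additivity identity gives exactly the claimed rational function, and the integrality of the coefficients is immediate from its explicit shape.

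The one point that requires care, and which I regard as the main (albeit mild) obstacle, is to verify that the isomorphism $\bigoplus_j F[-\bDelta_j]\cong M$ is in fact multigraded. This is precisely where the multihomogeneity of the basis enters: every element of $M$ admits a unique expression $\sum_j g_j h_j$ with $h_j\in F$, and if such an element lies in $M_\balpha$, then uniqueness together with $\deg(g_j) = \bDelta_j$ forces each $h_j$ to be multihomogeneous of multidegree $\balpha - \bDelta_j$ (or to vanish). This matches the shift convention built into $F[-\bDelta_j]$ and so makes the isomorphism degree-preserving; existence of a free basis itself is guaranteed, as recalled just before the theorem statement, by the fact that $F$ is a free right ideal ring in the sense of Cohn.
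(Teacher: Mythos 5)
Your proposal is correct and follows essentially the same route as the paper's own proof: both arguments rest on the short exact sequence $0\to M\to\bigoplus_i F[-\bdelta_i]\to N\to 0$, the additivity of multigraded Hilbert series over such sequences, and the identification of $M$ with the free multigraded module $\bigoplus_j F[-\bDelta_j]$ via the multihomogeneous free basis. The only difference is cosmetic — you split out the isomorphism $\bigoplus_j F[-\bDelta_j]\cong M$ and verify explicitly that it is degree-preserving, whereas the paper folds this into the single composite map $\e_j\mapsto g_j$ and simply declares all maps in the resulting sequence multigraded; your extra check is sound and fills in a detail the paper leaves implicit.
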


\begin{proof}
Consider $\{\e_1,\ldots,\e_s\}$ the canonical basis of the free right module
$\bigoplus_{1\leq j\leq s} F[-\bDelta_j]$ and define the multigraded right module
homomorphism
\[
\bigoplus_j F[-\bDelta_j]\to \bigoplus_i F[-\bdelta_i], \e_j\mapsto g_j.
\]
Since $\{g_j\}$ is a free basis of $M$, the map above implies a short exact
sequence
\[
0 \to \bigoplus_j F[-\bDelta_j]\to \bigoplus_i F[-\bdelta_i]\to N\to 0.
\]
Because all the homomorphisms are multigraded ones, we obtain that
\[
\HS(\bigoplus_j F[-\bDelta_j]) - \HS(\bigoplus_i F[-\bdelta_i]) + \HS(N) = 0
\]
which implies the stated formula.
\end{proof}

It is clear that for finitely generated but infinitely presented right modules
the above result does not apply and hence we have to follow a different path.
In the next section we begin by reducing the problem of determining multigraded
Hilbert series to the case of monomial cyclic right modules.

\section{Monomial cyclic right modules}
\label{macaulay}

Let $F^r$ be a free right module and consider $\{e_1,\ldots,e_r\}$ its canonical basis.
We denote $W(r) = \cup_{i=1}^r e_i W = \{e_i w\mid 1\leq i\leq r, w\in W\}$ which is
a canonical $\KK$-linear basis of $F^r$. The elements of $W(r)$ are called the
{\em monomials of $F^r$}. 

\begin{definition}
Let $\prec$ be a well-ordering of $W(r)$. We call $\prec$ a {\em monomial ordering}
of $F^r$ if $\prec$ is compatible with the right module structure of $F^r$, that is,
for all $e_i u, e_j v\in W(r)$ and $w\in W$, one has that
\[
e_i u\prec e_j v \Rightarrow e_i u w\prec e_j v w.
\]
\end{definition}

There are well-known examples of monomial orderings of $F^r$. For instance, for any
$e_i v, e_j w\in W(r)$ we can define that $e_i v\prec e_j w$ if and only if either
$v < w$ in some graded lexicographic ordering or $v = w$ and $i < j$. Assume now that
$F^r$ is endowed with a monomial ordering $\prec$. For any element $f\in F^r$
we denote by $\lm(f)$ the greatest, with respect to $\prec$, among the monomials
occurring in $f$. The element $\lm(f)\in W(r)$ is called the {\em leading monomial
of $f$}. We denote in addition by $\lc(f)\in\KK$ the coefficient that the monomial
$\lm(f)$ has in $f$ and we call it the {\em leading coefficient of $f$}.
If $M\subset F^r$ is a right submodule, then we denote by $\LM(M)$ the right
submodule of $F^r$ that is generated by the set
$\lm(M) = \{\lm(f)\mid f\in M,f\neq 0\}\subset W(r)$. Note that $\LM(M)$
is a {\em monomial right submodule} which means that it is generated by monomials
of $F^r$. Then, we call $\LM(M)$ the {\em leading monomial module of $M$}.
It is well known that to compute a minimal set of monomials generating $\LM(M)$
one uses the notion of (minimal) \Gr\ basis \cite{LSL1,LS1}. Although these bases
are generally infinite because of the non-Noetherianity of the free associative
algebra $F$, in many cases they can be described in closed form by the
aid of partial computations and formal arguments (see, for instance, \cite{DLS}).

We now state a generalization of Macaulay's basis theorem for commutative modules
in the noncommutative setting.

\begin{theorem}
\label{macth}
Let $N = \bigoplus_{1\leq i\leq r} F[-\bdelta_i]/M$ be a finitely generated
multigraded right module. Fix any monomial ordering for the free right module $F^r$
and consider $N' = \bigoplus_{1\leq i\leq r} F[-\bdelta_i]/\LM(M)$. Then, we have
that $\HF(N) = \HF(N')$ and hence $\HS(N) = \HS(N')$.
\end{theorem}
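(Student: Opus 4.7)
The plan is to adapt the classical Macaulay normal-form argument to the noncommutative multigraded setting. The crucial finiteness input is that each multihomogeneous set of monomials
\[
W(r)_\balpha = \{e_i w : 1\leq i\leq r,\ w\in W,\ \bdelta_i + \deg(w) = \balpha\}
\]
is finite: for fixed $i$, the equation $\deg(w) = \balpha - \bdelta_i$ forces $w$ to be a permutation of a prescribed multiset of letters (and the contribution is empty unless $\balpha \unrhd \bdelta_i$). I would then fix $\balpha \in \NN^n$ and define the set of normal monomials $\N_\balpha := W(r)_\balpha \setminus \lm(M)$. The goal is to show that $\N_\balpha$ projects to a $\KK$-basis of both $N_\balpha$ and $N'_\balpha$, yielding $\HF(N)(\balpha) = |\N_\balpha| = \HF(N')(\balpha)$ for every $\balpha$, which immediately gives $\HS(N) = \HS(N')$.

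The case of $N'$ is straightforward: $\LM(M)$ is itself multigraded and spanned by monomials, so its component $\LM(M)_\balpha$ is the $\KK$-span of $W(r)_\balpha \cap \lm(M)$, and the complement $\N_\balpha$ descends to a basis of $\bigoplus_i F[-\bdelta_i]_\balpha / \LM(M)_\balpha = N'_\balpha$.

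For $N_\balpha$, linear independence of the images of $\N_\balpha$ modulo $M$ is the standard leading-term argument: if a nonzero $\KK$-linear combination $f = \sum c_w w$ with $w \in \N_\balpha$ belonged to $M$, then $\lm(f) \in \lm(M)$, contradicting $\lm(f) \in \N_\balpha$. For the spanning property, I would run a reduction modulo $M$ entirely inside the single multihomogeneous component $F^r_\balpha := \bigoplus_i F[-\bdelta_i]_\balpha$. Given $f \in F^r_\balpha$ with $\lm(f) \in \lm(M)$, there exist $g \in M$ and $v \in W$ with $\lm(g)\cdot v = \lm(f)$; replacing $g$ by its multihomogeneous component of multidegree $\deg(\lm(g))$, which still lies in $M$ by multigradedness and retains the same leading monomial, makes $g\cdot v$ multihomogeneous of multidegree $\balpha$. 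Compatibility of $\prec$ with right multiplication then yields $\lm(g\cdot v) = \lm(f)$, so subtracting an appropriate scalar multiple of $g\cdot v$ from $f$ produces an element of $F^r_\balpha$ with strictly smaller leading monomial. Iterating, and using finiteness of $W(r)_\balpha$ to guarantee termination, the procedure expresses $f$ as a $\KK$-linear combination of $\N_\balpha$ plus an element of $M_\balpha$.

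The only real subtlety to control is that every reduction step must remain inside the fixed component $F^r_\balpha$: this is exactly where multigradedness of $M$, combined with the right-compatibility of the monomial ordering, is essential. It is also the reason the argument must be carried out one multidegree at a time (where $W(r)_\balpha$ is finite) rather than as a single global reduction on the non-Noetherian algebra $F$.
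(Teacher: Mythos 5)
Your proof is correct and follows essentially the same Macaulay normal-form argument as the paper: show $W(r)_\balpha\setminus\lm(M)$ projects to a basis of both $N_\balpha$ and $N'_\balpha$, with linear independence from the leading-term argument and spanning from a reduction carried out inside the fixed (finite) component $F^r_\balpha$ using multigradedness of $M$ and right-compatibility of $\prec$. Your version even makes explicit a step the paper glosses over (replacing $g$ by its multihomogeneous component of multidegree $\deg(\lm(g))$ before multiplying by $v$), and your $\N_\balpha = W(r)_\balpha\setminus\lm(M)$ coincides with the paper's $W(r)_\balpha\setminus M'$ since $W(r)\cap\LM(M)=\lm(M)$.
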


\begin{proof}
Put $M' = \LM(M)$ and denote $W(r)_\balpha =
W(r)\cap \bigoplus_i F[-\bdelta_i]_\balpha$, for any multidegree $\balpha\in\NN^n$.
Since $M'\subset \bigoplus_i F[-\bdelta_i]$ is a monomial module, a linear basis
of $\bigoplus_i F[-\bdelta_i]_\balpha/M'_\balpha$ is clearly given by
the set $\{e_i w + M'_\balpha\mid e_i w\in W(r)_\balpha\setminus M'\}$.
We have to prove that the corresponding set $\{e_i w + M_\balpha\mid
e_i w\in W(r)_\balpha\setminus M'\}$ is a linear basis of
$\bigoplus_i F[-\bdelta_i]_\balpha/M_\balpha$. Consider any multihomogeneous
element $f\in \bigoplus_i F[-\bdelta_i]_\balpha, f\neq 0$. If $\lm(f)\in M'$
then there exists a multihomogeneous element $g\in M$ such that $\lm(f) = \lm(g) v$,
for some $v\in W$. By putting $f_1 = f - g v \frac{\lc(f)}{\lc(g)}$ we obtain that
either $f \equiv f_1$ mod $M_\balpha$ with $f_1 = 0$ or $\lm(f)\succ \lm(f_1)$. 
In the latter case, we can repeat this division step for the multihomogeneous element
$f_1\in \bigoplus_i F[-\bdelta_i]_\balpha, f_1\neq 0$. Since $W(r)_\balpha$
is a finite set, we conclude that either $f \equiv f_2$ mod $M_\balpha$, for some
multihomogeneous element $f_2\in \bigoplus_i F[-\bdelta_i]_\balpha$ such that
$f_2 = 0$ or $\lm(f)\succ \lm(f_2)\notin M'$. If $f_2\neq 0$, then we consider
the multihomogeneous element $f_3 = f_2 - \lm(f_2)\lc(f_2)$ so that
$f - \lm(f_2)\lc(f_2)\equiv f_3$ mod $M_\balpha$. Note that one has either $f_3 = 0$
or $\lm(f_2)\succ \lm(f_3)$. By iterating the division process, we finally obtain
that $f\equiv f'$ mod $M_\balpha$ where $f' = \sum_k e_{i_k} w_k c_k$ with
$e_{i_k} w_k\in W(r)_\balpha\setminus M'$ and $c_k\in \KK$. Moreover, it is clear
that $f'\in M_\balpha$ if and only if $f' = 0$.
\end{proof}

The result above implies that the problem of computing a multigraded Hilbert series
can be reduced to the case of a finitely generated multigraded right module
$N = \bigoplus_i F[-\bdelta_i]/M$ where $M\subset\bigoplus_i F[-\bdelta_i]$ is a
monomial right submodule. In this case, we call $N$ a (finitely generated)
{\em monomial right module}. For these modules we are immediately reduced to
the cyclic case by means of the following result.

\begin{proposition}
\label{cycdec}
Let $N = \bigoplus_{1\leq i\leq r} F[-\bdelta_i]/M$ be a finitely generated
monomial right module and denote by $\{e_{i_k} w_k\}\subset W(r)$ a monomial
generating set of $M$. For each index $i = 1,2,\ldots,r$, let
$I_i\subset F[-\bdelta_i]$ be the monomial right ideal that is generated by
set $\{w_k\mid e_{i_k} = e_i\}$. Moreover, let $C_i = F[-\bdelta_i]/I_i$ be
the corresponding monomial cyclic right module. Thus, one has that
$M = \bigoplus_{i=1}^r e_i I_i$ and hence $N$ is isomorphic to
$\bigoplus_{i=1}^r C_i$. For the multigraded Hilbert series this implies that
\[
\HS(N) = \sum_{i=1}^r \HS(C_i).
\]
\end{proposition}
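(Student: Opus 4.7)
The plan is to exploit the right-$F$-module decomposition $F^r = \bigoplus_{i=1}^r e_i F$ and to match it with the partition of the given monomial generating set $\{e_{i_k} w_k\}$ of $M$ according to which basis vector $e_i$ each generator starts with. Since that partition is precisely what defines the right ideals $I_i \subset F$, we expect $M$ to split as $\bigoplus_i e_i I_i$ in a way that is compatible with the multigrading of $\bigoplus_i F[-\bdelta_i]$, and the Hilbert-series identity will then follow at once from additivity of dimension.

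First I would verify the two inclusions in $M = \bigoplus_{i=1}^r e_i I_i$. For ``$\supseteq$'', note that by definition the generators of $I_i$ are the words $w_k$ with $e_{i_k} = e_i$, and by construction $e_i w_k = e_{i_k} w_k\in M$; since $M$ is a right submodule we get $e_i(w_k f) = (e_i w_k) f\in M$ for every $f\in F$, hence $e_i I_i\subseteq M$. For ``$\subseteq$'', every element of $M$ is a right $F$-linear combination of the generators $e_{i_k} w_k$; the term corresponding to $e_{i_k} w_k$ has the form $e_{i_k} w_k f_k\in e_{i_k} F$, and collecting terms according to $i$ shows that each element of $M$ lies in $\bigoplus_i e_i I_i$. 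The sum is direct since it sits inside the direct sum $\bigoplus_i e_i F$, and it is a sum of multigraded submodules because each generator $e_i w_k$ is multihomogeneous and the $F$-action preserves multidegrees.

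Next I would pass to the quotient. Since the decomposition $\bigoplus_i F[-\bdelta_i] = \bigoplus_i e_i F[-\bdelta_i]$ is compatible with $M = \bigoplus_i e_i I_i$, the quotient splits termwise as multigraded right $F$-modules:
\[
N \;=\; \bigoplus_{i=1}^r F[-\bdelta_i]\,\Big/\, M \;\cong\; \bigoplus_{i=1}^r F[-\bdelta_i]/I_i \;=\; \bigoplus_{i=1}^r C_i.
\]
All the modules involved carry the multigradings inherited from the shifts $\bdelta_i$, and the canonical maps are multigraded, so this is an isomorphism of multigraded right $F$-modules.

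Finally, for each $\balpha\in\NN^n$, additivity of $\KK$-dimension over direct sums gives $\HF(N)(\balpha) = \sum_{i=1}^r \HF(C_i)(\balpha)$; multiplying by $t_1^{\alpha_1}\cdots t_n^{\alpha_n}$ and summing over $\balpha$ yields $\HS(N) = \sum_{i=1}^r \HS(C_i)$. There is no genuine obstacle in the argument; the only point that requires a moment of care is checking that the decomposition $M = \bigoplus_i e_i I_i$ really is a direct sum \emph{of multihomogeneous submodules}, which is immediate once one observes that each $e_i I_i$ is closed under the multigraded action of $F$ on $F[-\bdelta_i]$.
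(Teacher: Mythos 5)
Your proof is correct, and it is the natural argument. The paper in fact states Proposition~\ref{cycdec} without a written proof, leaving precisely this verification to the reader: the monomiality of the generating set is what ensures each generator $e_{i_k} w_k$ lives in a single summand $e_{i_k} F$, so $M$ decomposes as $\bigoplus_i e_i I_i$, the quotient splits termwise as multigraded $F$-modules, and the Hilbert-series identity follows from additivity of dimension over direct sums. Your write-up makes all of this explicit and correctly isolates the one point that needs care, namely that the decomposition respects the multigrading.
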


We recall that $F$ is a free right ideal ring which implies that a minimal
basis of a monomial right ideal $I$ is in fact a free one. If this basis is finite,
then one can immediately obtain the multigraded Hilbert series of the cyclic module
$C = F[-\bdelta]/I$ by means of the formula in Theorem \ref{fgformula}.
Unfortunately, this happens very seldom. For instance, even if $I$ is a finitely
generated two-sided ideal, it may be infinitely generated as a right ideal.
In the next section, to solve the problem of computing $\HS(C)$ in the general
case, we develop an iterative method which relates this series to the Hilbert series
of monomial cyclic right modules that are obtained from $C$.

\section{A key linear equation}
\label{method}

Let $C = F[-\bdelta]/I$ be a monomial cyclic right module. For any
$i = 1,2,\ldots,n$, denote by $\btheta_i$ the multidegree that the variable
$x_i$ has in $F$, that is, $\btheta_i = (0,\ldots,0,1,0,\ldots,0)$ where
1 occurs in the $i$-th position. Put $\bdelta_i = \bdelta + \btheta_i$ and observe
that $x_i$ has multidegree $\bdelta_i$ in $F[-\bdelta]$. Let $\{e_1,\ldots,e_n\}$
be the canonical basis of the multigraded free right module
$\bigoplus_{1\leq i\leq n} F[-\bdelta_i]$ and denote $\bx_i = x_i + I\in C$.
We define the multigraded right module homomorphism
\[
\varphi:\bigoplus_i F[-\bdelta_i]\to C, e_i\mapsto \bx_i.
\]
By definition, the image of this map is the multigraded right submodule 
$B = \langle \bx_1,\ldots,\bx_n \rangle\subset C$. Thus, the cokernel $C/B$ 
is either zero if $C = 0$ or it is isomorphic to the base field $\KK$ otherwise.
To compute the kernel of $\varphi$ one has to consider the colon right ideal
\[
(I :_R x_i) = \{f\in F\mid x_i f\in I\}
\]
which is also a monomial right ideal of $F$ (see \cite{LS2} for more details).
By putting $I_{x_i} = (I :_R x_i)$, we consider the monomial cyclic right
module $C_{x_i} = F[-\bdelta]/I_{x_i}$ and we denote $C_{x_i}[-\btheta_i] =
F[-\bdelta_i]/I_{x_i}$. Since $I$ is a monomial ideal, one has immediately that
\[
\ker\varphi = \bigoplus_i e_i I_{x_i}\subset \bigoplus_i F[-\bdelta_i].
\]
Therefore, we obtain the following short exact sequence of multigraded right
module homomorphisms
\[
0\to \bigoplus_{i=1}^n C_{x_i}[-\btheta_i]\to C\to C/B\to 0.
\]
For the corresponding multigraded Hilbert series, since one has clearly that
$\HS(C_{x_i}[-\btheta_i]) = t_i\cdot \HS(C_{x_i})$, we obtain the following key
linear equation
\begin{equation}
\label{keyeq}
\HS(C) = \sum_{i=1}^n t_i\cdot \HS(C_{x_i}) + c(I)
\end{equation}
where, by definition, $c(I)$ is the dimension of the cokernel $C/B$, that is
\[
c(I) =
\left\{
\begin{array}{cl}
0 & \mbox{if}\ I = \langle 1 \rangle, \\
1 & \mbox{otherwise.}
\end{array}
\right.
\]
In view of the equation (\ref{keyeq}), one may think to reduce the computation of the
multigraded Hilbert series $\HS(C)$ to the one of the series $\HS(C_{x_i})$
($1\leq i\leq n$) and iteratively to the computation of $\HS(C_{x_ix_j})$
($1\leq i,j\leq n$) and so on. It may happen that this process terminates
in a finite number of steps because many of these monomial cyclic right modules
coincide with each other. In the next section, we explain why and how,
in this case, the series $\HS(C)$ can be immediately obtained.

\section{Orbits of monomial right ideals}
\label{regular}

In this section, it is essential to recall the following two definitions
that have been used in \cite{LS2}.

\begin{definition}
Let $\mi$ denote the set of all monomial right ideals of $F$. For all variables
$x_i$ $(1\leq i\leq n)$ we let $T_{x_i}$ be the colon right ideal operator
on $\mi$ defined by $x_i$, namely $T_{x_i}(I) = (I :_R x_i)$ for any $I\in\mi$.
Moreover, we denote by $\O_I$ the minimal subset of $\mi$ containing $I$
such that $T_{x_i}(\O_I)\subset \O_I$ for any variable $x_i$.
The set $\O_I$ is called the {\em orbit of $I\in\mi$}. A monomial right ideal $I$
is called {\em regular} if its orbit $\O_I$ is a finite set.
\end{definition}

\begin{definition}
Consider $I\subset F$ a regular (monomial) right ideal and let its orbit
$\O_I = \{I_1,\ldots,I_r\}$ be an ordered set where $I_1 = I$. Define
a square matrix $A_I = (a_{kl})\in\ZZ^{r\times r}$ such that
\[
a_{kl} =\# \{1\leq i\leq n\mid T_{x_i}(I_k) = I_l\}.
\]
Let $E_r\in\ZZ^{r\times r}$ be the identity matrix and consider the field
of rational functions $\QQ(t)$ in the variable $t$ and with coefficients in $\QQ$
(in fact in $\ZZ$). We denote $p_I(t) = \det(t\cdot E_r - A_I) \in\QQ(t)$
the characteristic polynomial of $A_I$. Finally, define the column vector
$\C_I = (c(I_1),\ldots,c(I_r))^t$. We call $A_I, p_I(t)$ and $\C_I$ respectively,
the {\em adjacency matrix, characteristic polynomial and constant vector
of (the orbit of) $I$}.
\end{definition}

Observe that $\det(E_r - t\cdot A_I) = t^r p_I(1/t)\neq 0$,
since $E_r - t\cdot A_I = t (\frac{1}{t}\cdot E_r - A_I)\in \QQ(t)^{r\times r}$.
We now introduce a new set of matrices.

\begin{definition}
Let $I$ be a regular right ideal. For each index $i = 1,2,\ldots,n$, we denote by
$A_I^{(i)}\in \ZZ^{r\times r}$ the square matrix that is defined as follows
\[
A_I^{(i)} = (a_{kl}^{(i)}) \,,\, a_{kl}^{(i)} =
\left\{
\begin{array}{cl}
1 & \mbox{if}\ T_{x_i}(I_k) = I_l, \\
0 & \mbox{otherwise.}
\end{array}
\right.
\]
By definition, one has that $A_I = A_I^{(1)} + \cdots + A_I^{(n)}$ and therefore
we call $A_I^{(i)}$ the {\em $i$-th component of the adjacency matrix $A_I$}.
\end{definition}

Let $\QQ(t_1,\ldots,t_n)$ be the field of rational functions in the variables
$t_1,\ldots,t_n$ and with coefficients in $\QQ$. We may consider the square
matrix $E_r - \sum_i t_i\cdot A_I^{(i)}$ as an element of the matrix algebra
$\QQ(t_1,\ldots,t_n)^{r\times r}$.

\begin{lemma}
\label{invmat}
$E_r - \sum_i t_i\cdot A_I^{(i)}$ is an invertible matrix.
\end{lemma}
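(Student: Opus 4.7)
The plan is to show that the determinant of the matrix $E_r - \sum_i t_i\cdot A_I^{(i)}$ is a nonzero element of $\QQ(t_1,\ldots,t_n)$, which is enough to conclude invertibility in the matrix algebra $\QQ(t_1,\ldots,t_n)^{r\times r}$.

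First I would observe that, entrywise, $E_r - \sum_i t_i\cdot A_I^{(i)}$ is a matrix whose entries are polynomials in $\ZZ[t_1,\ldots,t_n]$ (in fact, each off-diagonal entry is a single variable $t_i$ or $0$, and each diagonal entry is either $1$ or $1-t_i$). Consequently $D(t_1,\ldots,t_n) := \det\bigl(E_r - \sum_i t_i\cdot A_I^{(i)}\bigr)$ is an element of $\ZZ[t_1,\ldots,t_n]$.

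The main point is to check that $D$ is not the zero polynomial. The quickest route is to evaluate at $t_1=\cdots=t_n=0$: in that case the matrix becomes the identity $E_r$, so $D(0,\ldots,0) = \det(E_r) = 1 \neq 0$. Therefore $D$ is a nonzero polynomial, and hence it is a nonzero (in fact invertible) element of the field of rational functions $\QQ(t_1,\ldots,t_n)$. This forces $E_r - \sum_i t_i\cdot A_I^{(i)}$ to be invertible over $\QQ(t_1,\ldots,t_n)^{r\times r}$, which is exactly what the lemma asserts.

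If one wanted additional confirmation, the specialization $t_1=\cdots=t_n=t$ recovers the matrix $E_r - t\cdot A_I$, whose determinant is $t^r p_I(1/t)$; as already noted in the text this is nonzero, which is another way to see that $D$ cannot vanish identically. I do not expect any genuine obstacle here: the argument reduces to a one-line evaluation at the origin, and no structural property of the orbit $\O_I$ beyond finiteness of $r$ is needed.
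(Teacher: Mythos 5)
Your proof is correct, and it is slightly more direct than the one in the paper. The paper specializes via the ring map $\QQ[t_1,\ldots,t_n]\to\QQ[t]$, $t_i\mapsto t$, sending the multivariate determinant to $\det(E_r - t\cdot A_I) = t^r p_I(1/t)$, and invokes the earlier observation that this univariate determinant is nonzero (because $p_I$ is monic of degree $r$, so the constant term of $t^r p_I(1/t)$ is $1$). You instead evaluate at $t_1=\cdots=t_n=0$ and read off the constant term $\det(E_r)=1$ immediately. Both arguments boil down to the same fact --- the determinant has constant term $1$ --- but yours is self-contained and does not route through the univariate adjacency matrix $A_I$ or the characteristic polynomial $p_I$, whereas the paper's reuses machinery it had already set up for the graded case. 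You even flagged the paper's specialization $t_i\mapsto t$ as an alternate check, so you clearly saw both paths. No gap.
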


\begin{proof}
Consider the algebra homomorphism $\QQ[t_1,\ldots,t_n]\to\QQ[t]$
such that $t_i\mapsto t$, for any $i=1,2,\ldots,n$. Clearly,
this homomorphism maps $\det(E_r - \sum_i t_i\cdot A_I^{(i)})$ into
$\det(E_r - t\cdot A_I)$. Since we have already observed that the latter
determinant is different from zero, we conclude that the same holds
for the former one.
\end{proof}

We now show that, in the regular case, the multigraded Hilbert series of a monomial
cyclic right module can be obtained by solving a linear system corresponding to the
non-singular matrix $E_r - \sum_i t_i\cdot A_I^{(i)}$.

\begin{theorem}
\label{mainth}
Let $C = F[-\delta]/I$ be a {\em regular monomial cyclic right module}, that is,
$I$ is a regular ideal. Then, the multigraded Hilbert series $\HS(C)$ belongs to
the rational function field $\QQ(t_1,\ldots,t_n)$.
\end{theorem}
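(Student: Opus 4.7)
The plan is to turn the iterated application of the key equation (\ref{keyeq}) into a single finite linear system over $\QQ(t_1,\ldots,t_n)$ whose solution is the vector of Hilbert series associated with the ideals in the orbit $\O_I$. Enumerate the orbit as $\O_I = \{I_1,\ldots,I_r\}$ with $I_1 = I$, and write $H_k = \HS(F[-\bdelta]/I_k)$ for each $k$, so that the target quantity is $\HS(C) = H_1$.

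First I would apply the key equation (\ref{keyeq}) to each ideal $I_k$ of the orbit. Regularity of $I$ ensures that every colon ideal $T_{x_i}(I_k) = (I_k :_R x_i)$ still lies in $\O_I$, so for each pair $(k,i)$ there is a unique index $l(k,i)$ such that $\HS((F[-\bdelta]/I_k)_{x_i}) = H_{l(k,i)}$. Packaging these incidences via the component matrices $A_I^{(i)}$ of the preceding definition, the $r$ instances of (\ref{keyeq}) collectively read, in matrix form with $\vH_I = (H_1,\ldots,H_r)^t$,
\[
\Bigl(E_r - \sum_{i=1}^n t_i\cdot A_I^{(i)}\Bigr) \vH_I = \C_I.
\]

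Next, Lemma \ref{invmat} guarantees that the coefficient matrix is invertible in $\QQ(t_1,\ldots,t_n)^{r\times r}$, so $\vH_I = (E_r - \sum_i t_i\cdot A_I^{(i)})^{-1} \C_I$ has entries in $\QQ(t_1,\ldots,t_n)$; extracting the first component yields $\HS(C) = H_1 \in \QQ(t_1,\ldots,t_n)$, as claimed. In fact, by Cramer's rule each $H_k$ is a ratio of determinants whose common denominator is $\det(E_r - \sum_i t_i\cdot A_I^{(i)})$, which is the specialization of $t^r p_I(1/t)$ at $t_1 = \cdots = t_n = t$ evaluated componentwise.

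The step requiring the most care is the matrix assembly: one must check that the map $(k,i) \mapsto l(k,i)$ induced by the colon operators reproduces exactly the rows of $E_r - \sum_i t_i\cdot A_I^{(i)}$ (so that the $+t_i$ coefficient of $H_{l(k,i)}$ in (\ref{keyeq}) lines up with the $(k,l)$-entry of $A_I^{(i)}$) and that the constant $c(I_k)$ on the right-hand side of (\ref{keyeq}) matches the $k$-th entry of $\C_I$ by definition. Both are immediate from the definitions of $A_I^{(i)}$ and $\C_I$, but once these identifications are in place the rationality claim is a one-line consequence of Lemma \ref{invmat}.
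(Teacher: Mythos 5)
Your proposal is correct and follows essentially the same route as the paper's own proof: enumerate the finite orbit, apply the key equation (\ref{keyeq}) to each ideal $I_k$ to build the linear system $(E_r - \sum_i t_i A_I^{(i)})\vH = \C_I$, and invoke Lemma \ref{invmat} for invertibility. The remark about Cramer's rule is a harmless addition; the paper simply cites uniqueness of the solution of the nonsingular system.
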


\begin{proof}
Let $\O_I = \{I_1,\ldots,I_r\}$ be the finite orbit of $I$ and consider
the monomial cyclic right module $C_k = F[-\delta]/I_k$, for all
$k = 1,2,\ldots,r$. Note that all $I_k$ are in fact regular ideals, because
by definition $\O_{I_k}\subset \O_I$. For each module $C_k$, the linear
equation (\ref{keyeq}) becomes
\begin{equation}
\label{keysys}
\HS(C_k) = \sum_{i=1}^n t_i \cdot \HS(C_{l_{ki}}) + c(I_k)
\end{equation}
where the index $l_{ki}\in\{1,2,\ldots,r\}$ is defined as $T_{x_i}(I_k) =
I_{l_{ki}}$. The matrix form of this linear system with coefficients
in $\QQ(t_1,\ldots,t_n)$ is clearly
\[
(E_r - \sum _{i=1}^n t_i \cdot A_I^{(i)})\vH = \C_I
\]
where $\vH = (\HS(C_1),\ldots,\HS(C_r))^t$ is a column vector of unknown
multigraded Hilbert series. From Lemma \ref{invmat} it follows that the linear
system (\ref{keysys}) has a unique solution, that is, all series $\HS(C_k)$
($C = C_1$) are rational functions with integer coefficients.
\end{proof}

On account of Theorem \ref{macth} and Proposition \ref{cycdec}, the result above
can be immediately generalized in the following way.

\begin{theorem}
Let $N = \bigoplus_{1\leq i\leq r} F[-\bdelta_i]/M$ be a finitely generated
multigraded right module. Fix any monomial ordering for the free right module $F^r$
and consider $N' = \bigoplus_{1\leq i\leq r} F[-\bdelta_i]/M'$ where $M' = \LM(M)$
is the leading monomial module of $M$. According to Proposition \ref{cycdec},
the finitely generated monomial right module $N'$ is isomorphic to a direct sum
$\bigoplus_i C_i$ where $C_i = F[-\bdelta_i]/I_i$ and $I_i$ $(1\leq i\leq r)$
is a monomial right ideal such that $M'$ is isomorphic to $\bigoplus_i I_i$.
If each $I_i$ is a regular ideal, then the multigraded Hilbert series $\HS(N) =
\sum_{1\leq i\leq r} \HS(C_i)$ is a rational function with integer coefficients.
\end{theorem}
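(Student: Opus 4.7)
The plan is to obtain the result by chaining together the three preceding statements (Theorem \ref{macth}, Proposition \ref{cycdec}, and Theorem \ref{mainth}) with essentially no new work, since the theorem is stated as a natural generalization of the cyclic regular case.

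First, I would invoke Theorem \ref{macth} applied to the module $N$ with the chosen monomial ordering on $F^r$. This immediately gives $\HS(N) = \HS(N')$, so the problem reduces to computing the multigraded Hilbert series of the monomial right module $N' = \bigoplus_i F[-\bdelta_i]/M'$. Next, using Proposition \ref{cycdec}, I would write $N' \cong \bigoplus_{i=1}^r C_i$ with $C_i = F[-\bdelta_i]/I_i$, and take the corresponding decomposition $\HS(N') = \sum_{i=1}^r \HS(C_i)$ provided by that proposition.

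At this stage the hypothesis that each $I_i$ is regular enters: for each $i$, Theorem \ref{mainth} applies to the regular monomial cyclic right module $C_i$, yielding that $\HS(C_i)\in\QQ(t_1,\ldots,t_n)$ is a rational function with integer coefficients. Summing these finitely many rational functions gives $\HS(N) = \sum_{i=1}^r \HS(C_i)$ as a rational function whose numerator and denominator can be taken with integer coefficients (the common denominator $\det(E_{r_i} - \sum_j t_j A_{I_i}^{(j)})$ for each summand specializes, under $t_j \mapsto t$, to a nonzero polynomial, so the product remains a well-defined denominator in $\ZZ[t_1,\ldots,t_n]$).

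There is really no genuine obstacle here; the only minor thing to be careful about is the bookkeeping around the shift $[-\bdelta_i]$, which only multiplies each $\HS(C_i)$ by the monomial $t_1^{\delta_{i1}}\cdots t_n^{\delta_{in}}$ and preserves both rationality and integrality of coefficients. So the bulk of the proof is simply stating the three citations in the correct order and observing that finite sums of rational functions with integer coefficients remain rational functions with integer coefficients.
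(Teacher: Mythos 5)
Your proposal is correct and follows exactly the chain of reasoning the paper intends: the paper offers no separate proof for this theorem, stating only that it follows ``on account of Theorem \ref{macth} and Proposition \ref{cycdec}'' from Theorem \ref{mainth}, which is precisely the three-step reduction you carry out. Your additional remarks about the shift $[-\bdelta_i]$ only contributing a monomial factor and about the nonvanishing of the specialized common denominator are accurate and consistent with Lemma \ref{invmat}.
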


Observe now that two problems need to be solved in order to use
the arguments of Theorem \ref{mainth} as an effective method
to compute multigraded Hilbert series. One question is how to compute
the colon right ideal operators $T_{x_i}$ and the second consists in providing
an ``internal characterization'' of the property that a monomial right ideal $I$
is regular, that is, the orbit $\O_I$ is a finite set. Both these problems
have been solved in \cite{LS2} and, for the sake of completeness,
we recall briefly here the corresponding results. Recall that we denote by $W$
the set of monomials of the free associative algebra $F = \KK\langle X \rangle$,
that is, $W$ is the set of all words over the alphabet $X = \{x_1,\ldots,x_n\}$.
Note that for computing the orbit $\O_I$ one needs to iteratively apply
the operators $T_{x_i}$ on the monomial right ideal $I$ until one obtains
a stable set. In other words, for any monomial $w = x_{i_1}\cdots x_{i_d}\in W$,
we define
\[
T_w(I) =  (T_{x_{i_d}}\cdots T_{x_{i_1}})(I) =
(((I :_R x_{i_1}) \cdots ) :_R x_{i_d}) = (I :_R w),
\]
where $(I :_R w) = \{f\in F\mid w f\in I\}$. Note that $w\in I$ if and only if
$(I :_R w) = \langle 1 \rangle$.

\begin{proposition}[\cite{LS2}]
\label{colop}
Consider a monomial right basis $\{w_j\}\subset W$ of a monomial right ideal
$I\subset F$ and let $w\in W$. For all $j$, we denote
\begin{equation*}
w'_j =
\left\{
\begin{array}{cl}
 1  & \mbox{if}\ w = w_j v_j\ (v_j\in W), \\
v_j & \mbox{if}\ w_j = w v_j, \\
 0  & \mbox{otherwise}.
\end{array}
\right.
\end{equation*}
Then, the subset $\{w'_j\}\subset W$ is a monomial right basis of $T_w(I) = (I :_R w)$.
\end{proposition}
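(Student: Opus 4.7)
The plan is to work directly with the defining condition $u\in T_w(I)=(I:_R w)$ for a monomial $u\in W$, which is simply $wu\in I$. Because $I$ is monomial with monomial right basis $\{w_j\}$, one has $I\cap W=\{u\in W\mid w_j\text{ is a prefix of }u\text{ for some }j\}$. A routine preliminary observation (which also justifies that $T_w(I)$ is itself monomial) is: for an arbitrary polynomial $f=\sum_k c_k m_k\in F$ with distinct monomials $m_k\in W$, the products $wm_k$ are again distinct monomials, so $wf\in I$ if and only if $wm_k\in I$ for every $k$. Hence it suffices to describe the monomials $u\in W$ such that $wu\in I$, and to check that these are precisely the monomial multiples of the nonzero $w'_j$.

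First I would characterise, for a fixed generator $w_j$, the set of $u\in W$ for which $w_j$ is a prefix of $wu$. Splitting on whether $|w_j|\le|w|$ or $|w_j|>|w|$ gives three mutually exclusive cases. If $w_j$ is a prefix of $w$, write $w=w_jv_j$; then $wu=w_jv_ju\in I$ for every $u\in W$, so the contribution of $w_j$ to $T_w(I)$ is all of $F$, which matches the convention $w'_j=1$. If instead $w$ is a strict prefix of $w_j$, write $w_j=wv_j$; then $w_j$ is a prefix of $wu$ if and only if $v_j$ is a prefix of $u$, i.e.\ $u\in v_jW$, matching $w'_j=v_j$. In every other case neither $w_j$ nor $w$ is a prefix of the other, so $w_j$ cannot be a prefix of any $wu$, and $w_j$ contributes nothing, matching $w'_j=0$.

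Putting the three cases together, a monomial $u\in W$ satisfies $wu\in I$ exactly when some $w'_j$ is nonzero and is a prefix of $u$ (with $w'_j=1$ forcing $u\in W$ arbitrary, hence $T_w(I)=\langle 1\rangle$ in that situation). Therefore the monomials of $T_w(I)$ are precisely the right multiples of the nonzero $w'_j$, which is exactly the statement that $\{w'_j\}$ is a monomial right basis of $T_w(I)=(I:_R w)$. I do not foresee a genuine obstacle here; the only delicate point is to be sure the three cases are exhaustive and mutually exclusive (which comes down to the fact that two words $w,w_j$ can be in at most one prefix relation) and to handle the boundary conventions $w'_j=1$ and $w'_j=0$ correctly so that they correspond, respectively, to the whole ring and to the zero contribution when assembling the right basis.
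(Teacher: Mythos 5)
Your argument is correct. The paper itself gives no proof of this proposition (it is quoted from \cite{LS2}), so there is nothing to compare against line by line; but your proof is the natural one and is complete: you reduce to monomials via the injectivity of $u\mapsto wu$, you correctly characterize when a generator $w_j$ can be a prefix of $wu$ by a length argument that yields exactly the three mutually consistent cases of the statement (with the overlap at $w=w_j$ agreeing in both branches), and you conclude that the monomials of $(I:_R w)$ are precisely the right multiples of the nonzero $w'_j$.
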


An important case is when $I$ is a monomial two-sided ideal, that is, $A = F/I$
is a monomial algebra. It is easy to prove that a two-sided ideal is always contained
in the colon right ideals it defines.

\begin{proposition}[\cite{LS2}]
\label{colbas}
Consider a monomial two-sided basis $\{w_j\}\subset W$ of a monomial two-sided ideal
$I\subset F$ and let $w\in W, w\notin I$. For all $j$, we define the monomial right ideal
\[
R(w,w_j) = \langle v_{jk}\mid u_{jk} w_j = w v_{jk}, u_{jk},v_{jk}\in W,
\tdeg(v_{jk}) < \tdeg(w_j) \rangle.
\]
Then, one has that $(I :_R w) = \sum_j R(w,w_j) + I$.
\end{proposition}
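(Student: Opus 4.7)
The plan is to prove the two inclusions separately, reducing the non-trivial one to a case analysis on monomials.

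For the easy inclusion $\sum_j R(w,w_j) + I \subseteq (I :_R w)$, I would argue as follows. Since $I$ is in particular a right ideal, $I \subseteq (I :_R w)$: if $f \in I$, then $wf \in I$. For any generator $v_{jk}$ of $R(w,w_j)$, the defining relation $w v_{jk} = u_{jk} w_j$ together with $w_j \in I$ and the fact that $I$ is two-sided gives $w v_{jk} \in I$, i.e. $v_{jk} \in (I :_R w)$. Since $(I :_R w)$ is a right ideal, it absorbs all of $R(w,w_j)$.

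For the reverse inclusion $(I :_R w) \subseteq \sum_j R(w,w_j) + I$, I would first observe that since $I$ is a monomial ideal, so is $(I :_R w)$: if $f = \sum_i c_i v_i$ with distinct monomials $v_i$, then $wf = \sum_i c_i (wv_i)$ has all distinct monomials $wv_i$, so $wf \in I$ forces each $wv_i \in I$. It therefore suffices to show that every monomial $v \in W$ with $wv \in I$ lies in $\sum_j R(w,w_j) + I$.

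For such a $v$, since $\{w_j\}$ is a two-sided monomial basis of $I$, one can write $wv = u w_j u'$ for some index $j$ and words $u,u' \in W$. The heart of the proof is then a case analysis on the position of the factor $w_j$ inside $wv$, measured by $|u|$ and $|u|+\tdeg(w_j)$ relative to $\tdeg(w)$. Case~A: $|u| + \tdeg(w_j) \leq \tdeg(w)$, so $w_j$ is a factor of $w$; this forces $w \in I$, contradicting the hypothesis $w \notin I$. Case~B: $|u| \geq \tdeg(w)$, so writing $u = w u''$ yields $v = u'' w_j u' \in I$. Case~C: $|u| < \tdeg(w) < |u|+\tdeg(w_j)$, the only genuinely new case. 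Writing $w = u w''$ with $w'' \neq 1$ and $v = v' u'$ accordingly, the equation $wv = u w_j u'$ forces $w_j = w'' v'$, which rewrites as $w v' = u w_j$ with $\tdeg(v') = \tdeg(w_j) - \tdeg(w'') < \tdeg(w_j)$. Hence $v'$ is exactly one of the generators $v_{jk}$ of $R(w,w_j)$, and $v = v' u' \in R(w,w_j)$ as an element of the right ideal.

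The main obstacle I anticipate is book-keeping at the boundary of the three cases (particularly when $|u|=0$ or when $v'$ or $u'$ degenerates to the empty word), together with the preliminary reduction to monomial elements of $(I :_R w)$, which relies on the observation that $(I :_R w)$ is itself monomial. Once these two points are cleanly justified, the overlap analysis in Case~C is exactly what the definition of $R(w,w_j)$ was designed to capture, and the proposition follows.
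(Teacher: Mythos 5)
The paper cites Proposition~\ref{colbas} from \cite{LS2} without reproducing its proof, so there is no in-paper argument to compare against; but your proof is correct and is the natural argument for this statement. Both inclusions are verified cleanly: the easy one uses only that $I$ is two-sided and $(I:_R w)$ is a right ideal; for the reverse, your preliminary reduction to monomials (via the observation that $(I:_R w)$ is a monomial right ideal, using left cancellation in the free monoid to see that the $wv_i$ remain distinct) is essential and correctly handled, and the three-way split on the position of the occurrence of $w_j$ inside $wv$ is exactly the right case analysis. Case~A correctly uses $w\notin I$ to rule out $w_j$ sitting entirely inside $w$; Case~B lands in $I$; and in Case~C the overlap word $v'$ satisfies $wv'=u\,w_j$ with $\tdeg(v')=\tdeg(w_j)-\tdeg(w'')<\tdeg(w_j)$ precisely because $w''\neq 1$, so $v'$ is one of the generators $v_{jk}$ and $v=v'u'\in R(w,w_j)$. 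The boundary cases you flag (e.g.\ $u=1$, $u'=1$) cause no trouble, and $v'\neq 1$ is automatic in Case~C, so the book-keeping goes through. This matches the argument in \cite{LS2}.
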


We now recall some notions from the theory of formal languages (see, for instance,
\cite{DLV}).

\begin{definition}
\label{reglangdef}
Any subset $L\subset W$ is called a {\em (formal) language}. Given two languages
$L,L'\subset W$, we consider their set-theoretic union $L\cup L'$ and their
product $L\cdot L' = \{w w'\mid w\in L,w'\in L'\}$. Moreover, one defines
the {\em star operation} $L^* = \bigcup_{d\geq 0} L^d$, where $L^0 = \{1\}$
and $L^d = L\cdot L^{d-1}$ for $d\geq 1$. The union, the product and the star
operation are called the {\em rational operations} over the languages.
A language $L\subset W$ is called {\em regular} if it can be obtained from
finite languages by applying a finite number of rational operations.
\end{definition}

A characterization of the finiteness of the orbit of a monomial right ideal is
provided by the following key result.

\begin{theorem}[\cite{LS2}]
\label{reglang}
The monomial right ideal $I\subset F$ is regular if and only if the corresponding
language $L = I\cap W$ is regular.
\end{theorem}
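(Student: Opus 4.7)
The plan is to reduce the statement to the classical Myhill--Nerode characterization of regular languages by exhibiting a natural bijection between the orbit $\O_I$ and the set of left quotients of $L=I\cap W$.

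First I would observe that a monomial right ideal $J\subset F$ is completely determined by the set of monomials $J\cap W$, since $J$ equals the $F$-linear span of $J\cap W$. In particular, elements of the orbit $\O_I$ can be compared by comparing their monomial parts. Unpacking the iterated colon operation, for any $w=x_{i_1}\cdots x_{i_d}\in W$ one has
\[
T_w(I) \cap W \;=\; \{u\in W \mid wu\in I\} \;=\; \{u\in W\mid wu\in L\} \;=\; w^{-1}L,
\]
the left quotient of $L$ by $w$. Therefore the assignment $w\mapsto T_w(I)$ factors through the Nerode equivalence $u\equiv v\iff u^{-1}L=v^{-1}L$ and induces a bijection
\[
\O_I \;\longleftrightarrow\; \{\,w^{-1}L \mid w\in W\,\}.
\]

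Next I would invoke the Myhill--Nerode theorem: a language $L\subseteq W$ is regular (in the sense of Definition \ref{reglangdef}) if and only if the set $\{w^{-1}L\mid w\in W\}$ of its left quotients is finite. Combining this with the bijection above yields the chain of equivalences
\[
I \text{ regular} \iff \O_I \text{ finite} \iff \{w^{-1}L\mid w\in W\} \text{ finite} \iff L \text{ regular},
\]
which is exactly the claim.

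The only conceptual point that requires care is reconciling the two a priori different definitions of ``regular'' — the rational-expression definition used in Definition \ref{reglangdef}, versus the automaton/Nerode-equivalence definition used by Myhill--Nerode. The bridge is Kleene's theorem, which asserts that a language is obtainable from finite languages by a finite number of rational operations exactly when it is recognized by a finite deterministic automaton, hence exactly when its Nerode equivalence has finitely many classes. Both Kleene's theorem and Myhill--Nerode are standard and can simply be cited from the reference \cite{DLV}. So the substantive content of the proof reduces to the short bookkeeping identity $T_w(I)\cap W = w^{-1}L$, which I expect to be the only nontrivial verification; everything else is an appeal to classical automata theory.
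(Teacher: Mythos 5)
The paper does not prove this result itself; it states it as a recall from \cite{LS2}, so there is no in-paper proof to compare against line by line. Your argument is correct and is the natural route one expects to find in \cite{LS2}: the identity $T_w(I)\cap W = w^{-1}L$, together with the observation that a monomial right ideal is determined by its set of monomials, puts $\O_I$ in bijection with the set of left quotients of $L$; Kleene's theorem then identifies the rational-expression notion of regularity from Definition~\ref{reglangdef} with the automaton notion, and Myhill--Nerode identifies the latter with finiteness of the set of left quotients. The one bookkeeping identity you flag as the nontrivial step is indeed the crux, and it holds because $T_w(I)=(I:_R w)$ is again a monomial right ideal whose monomial part is $\{u\in W\mid wu\in I\cap W\}$ (compare Proposition~\ref{colop}). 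One tiny phrasing slip: ``$F$-linear span'' should be the $\KK$-linear span of $J\cap W$ (equivalently, the right $F$-submodule it generates); either reading yields the injectivity of $J\mapsto J\cap W$ that you need, so the argument is unaffected.
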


By using standard results of the theory of finite-state automata, which is strictly
related to the theory of regular languages, in \cite{LS2} it was proved that
the number of variables in the key linear system (\ref{keysys}) is minimal to obtain
the Hilbert series of the monomial cyclic right module under consideration.
In other words, the proposed method for the computation of such a series (both graded
and multigraded) is optimal with respect to the required number of iterations
of the colon right ideal operators. This is essentially a promise of efficiency
that we are able to verify in practice in Section \ref{timings}.

If a right or two-sided monomial ideal is finitely generated, then this ideal is clearly
regular by means of Theorem \ref{reglang}. For instance, if $\{w_1,\ldots,w_k\}\subset W$
is a finite basis of the two-sided monomial ideal $I$, then
\[
I\cap W = W\cdot \{w_1,\ldots,w_k\}\cdot W
\]
where clearly $W = X^*$. In other words, the language $I\cap W$ is obtained
by applying a finite number of rational operations over the finite languages
$X = \{x_1,\ldots,x_n\}$ and $\{w_1,\ldots,w_k\}$. By Theorem \ref{mainth},
this implies that a finitely presented monomial algebra $A = F/I$ has always
a rational multigraded Hilbert series. Observe that, for the graded case, this agrees
with the classical results in \cite{Go} and \cite{Uf1}. Since Theorem \ref{mainth}
and the corresponding method are more general than the finitely presented case,
in Section \ref{example} we will illustrate the computation of the multigraded
Hilbert series of an infinitely presented but regular monomial algebra.
Note that regular monomial algebras are also called {\em automaton monomial
algebras} \cite{Uf2}.

\section{Finite-dimensional case and truncation}
\label{truncation}

Let $C$ be a regular monomial cyclic right module. In this section
we aim to characterize the case in which $C$ is finite-dimensional. Moreover,
for an infinite-dimensional $C$, we want to develop methods for the polynomial
approximation of the multigraded Hilbert series of $C$, which is possibly
not rational and unknown. To develop the theory for the finite-dimensional case,
it is convenient to switch to the (graded) Hilbert series $\HS'(C)$ that is
obtained from $\HS(C)$ by identifying all variables $t_i$ ($1\leq i\leq n$)
with a single variable $t$. Let $C = F/I$ and consider the orbit
$\O_I = \{I_1,\ldots,I_r\}$ ($I_1 = I$). Moreover, let $A_I$ and $\C_I$ be the
corresponding adjacency matrix and constant vector. As in Theorem \ref{mainth},
we have that the Hilbert series $\HS'(C_k)$ ($C_k = F/I_k$) are obtained
by solving the matrix equation
\[
(E_r - t\cdot A_I)\vH' = \C_I
\]
where $\vH' = (\HS'(C_1),\ldots,\HS'(C_r))^t$. We want to understand when
the (regular monomial cyclic) right modules $C_k$ have finite dimensions, that is,
the solution vector $\vH'$ has all entries in the polynomial algebra $\QQ[t]$.
For any monomial $w\in W$, we have already observed that $w\in I$ if and only
if $T_w(I) = \langle 1 \rangle$ which implies that $\langle 1 \rangle\in\O_I$.
Assume now that $C\neq 0$, that is, $I\neq \langle 1\rangle$ and hence
$r > 1$. Assume that $I_r = \langle 1 \rangle$ and define the {\em reduced
orbit of $I$} as
\[
\bO_I = \O_I\setminus\{\langle 1\rangle\} = \{I_1,\ldots,I_{r-1}\}.
\]
Clearly, $\C_I = (1,\ldots,1,0)^t$ and $\HS'(C_r) = 0$ and therefore
we consider $\bvH' = (\HS'(C_1),\ldots,\HS'(C_{r-1}))^t$ and $\bC_I =
(1,\ldots,1)^t$. Moreover, we denote by $\bA_I$ the square matrix
that is obtained from $A_I$ by deleting the $r$-th row and the $r$-th column.
Finally, we put $\bp_I(t) = \det(t\cdot E_{r-1} - \bA_I)$, that is, $\bp_I(t)$ is
the characteristic polynomial of $\bA_I$. We call $\bA_I,\bp(t)$ and $\bC_I$
respectively, the {\em reduced adjacency matrix, reduced characteristic polynomial
and reduced constant vector of $I$}.

Since $\HS'(C_r) = 0$, it is clear that one can obtain the Hilbert series
$\HS'(C_k)$ ($1\leq k\leq r-1$) by solving the reduced matrix equation
\begin{equation}
\label{redmateq}
(E_{r-1} - t \cdot \bA_I) \bvH' = \bC_I.
\end{equation}
Note that if we assume $\det(E_{r-1} - t \cdot \bA_I)\in\QQ\setminus\{0\}$,
then the matrix inverse $(E_{r-1} - t \cdot \bA_I)^{-1}$ belongs to
$\QQ[t]^{(r-1)\times (r-1)}$ and hence all solutions of equation (\ref{redmateq})
are in fact in the polynomial algebra $\QQ[t]$. Moreover, since
$\det(E_{r-1} - t \cdot \bA_I) = t^{r-1}\bp_I(1/t)\neq 0$, we have in particular
that $\det(E_{r-1} - t \cdot \bA_I) = 1$ if and only if the matrix $\bA_I$
is nilpotent, that is, $\bp_I(t) = t^{r-1}$. In other words, the latter condition
implies that the right module $C$ (in fact each $C_k$) is finite-dimensional.
We now show that the nilpotency of $\bA_I$ is also a necessary condition.

\begin{theorem}
\label{fdprop}
Let $C = F/I$ be a finite-dimensional monomial cyclic right module.
It holds that $C$ is a regular module. Moreover, if $C\neq 0$ and
$\bO_I = \{I_1,\ldots,I_{r-1}\}$ is the reduced orbit of $I$, then
$T_{x_i}(I_k) = I_l$ $(1\leq i\leq n, 1\leq k,l\leq r-1)$
implies that $\dim C_k > \dim C_l$. 
\end{theorem}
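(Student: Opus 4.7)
The plan is to handle the two claims separately, and the key observation is that a monomial cyclic right module $C = F/I$ is completely determined by the set $N = W\setminus I$ of monomials not in $I$, which is a $\KK$-basis of $C$.

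\smallskip

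For the first assertion, I would argue that finite-dimensionality of $C$ implies that $N$ is a finite subset of $W$. For any word $w\in W$, Proposition \ref{colop} shows that a monomial basis of the colon ideal $(I :_R w)$ can be read off from the behaviour of $w$ with respect to the monomial generators of $I$, but for this argument it is cleaner to work with complements: one has $v\notin (I:_R w)$ if and only if $wv\notin I$, i.e.\ $wv\in N$. Hence the ``complement'' $W\setminus (I:_R w)$ is in bijection, via left multiplication by $w$, with $\{u\in N : u = wv \text{ for some } v\in W\}$, a subset of the finite set $N$. Since distinct colon ideals have distinct complements, the assignment $w\mapsto (I:_R w)$ takes only finitely many values, so $\O_I$ is finite and $I$ is regular.

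\smallskip

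For the second assertion, suppose $T_{x_i}(I_k) = I_l$ with $1\le k,l\le r-1$, so $I_l = (I_k :_R x_i)$. By the very definition of the colon ideal, the map
\[
\psi : W\setminus I_l \longrightarrow W\setminus I_k, \qquad v\longmapsto x_i v,
\]
is well-defined and injective (the second because free monoids have unique factorization). Since $W\setminus I_l$ and $W\setminus I_k$ are the canonical $\KK$-bases of $C_l$ and $C_k$ respectively, this already gives $\dim C_l\le \dim C_k$. To obtain a strict inequality I would exhibit a monomial in $W\setminus I_k$ that is not in the image of $\psi$. The natural candidate is $1\in W$: because $k\le r-1$, the ideal $I_k$ belongs to the reduced orbit $\bO_I$ and hence $I_k\neq \langle 1\rangle$, which means $1\notin I_k$, so $1\in W\setminus I_k$. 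On the other hand $1$ has no factorization of the form $x_i v$, so $1\notin \psi(W\setminus I_l)$. Therefore $\dim C_k\ge \dim C_l + 1 > \dim C_l$, as claimed.

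\smallskip

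There is no real obstacle here; the only mildly delicate point is keeping straight which side of the colon the variable $x_i$ lives on. The role of the hypothesis $l\le r-1$ (so that $C_l$ is nonzero and the claim has content) and $k\le r-1$ (so that $1$ is available as a witness to strict inequality) is essentially the reason for passing to the reduced orbit $\bO_I$ in the first place.
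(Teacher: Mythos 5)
Your argument for the second assertion is correct: the injection $\psi\colon W\setminus I_l \to W\setminus I_k$, $v\mapsto x_i v$, together with the witness $1\in (W\setminus I_k)\setminus\psi(W\setminus I_l)$, gives precisely $\dim C_k > \dim C_l$. This is a hands-on version of the paper's argument, which instead reads off the stronger identity $\dim C = \sum_{i=1}^n \dim C_{x_i} + 1$ from the key short exact sequence and then iterates. You also reverse the logical order of the two claims: the paper first proves the dimension drop and then deduces finiteness of the orbit from it (a strictly decreasing sequence of nonnegative dimensions must terminate), whereas you try to establish regularity first, directly from the combinatorics of the set of normal words $N = W\setminus I$.

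That direct argument for regularity has a gap as written. You identify $W\setminus(I:_R w)$ with the set $S_w = \{u\in N : u = wv \text{ for some } v\in W\}$ via $v\mapsto wv$, and then infer finitely many colon ideals from ``finitely many subsets of $N$'' together with ``distinct colon ideals have distinct complements.'' But the bijection $v\mapsto wv$ depends on $w$: a priori, two words $w\neq w'$ could give $S_w = S_{w'}$ as subsets of $N$ while still producing different complements $W\setminus(I:_R w)$ and $W\setminus(I:_R w')$, so your two observations together do not bound the number of distinct colon ideals. The conclusion is true and the gap is small; the cleanest repair is to note that if $w\notin N$, i.e.\ $w\in I$, then $w\cdot 1 = w\in I$ gives $1\in(I:_R w)$ and hence $(I:_R w) = \langle 1\rangle$, so every member of $\O_I$ other than $\langle 1\rangle$ is $(I:_R w)$ for some $w$ in the finite set $N$, giving the explicit bound $\#\O_I \le \#N + 1$. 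With this repair your route is valid and somewhat more explicit than the paper's, at the cost of needing the second claim proved separately rather than as the source of finiteness.
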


\begin{proof}
Consider the key short exact sequence of Theorem \ref{mainth}, namely
\[
0\to \bigoplus_{i=1}^n C_{x_i}\to C\to C/B\to 0
\]
where $C_{x_i} = F/I_{x_i}$ and $I_{x_i} = T_{x_i}(I) = (I :_R x_i)$.
Since $\dim C < \infty$, one has that all (monomial cyclic) right modules
$C_{x_i}$ are also finite-dimensional and
\[
\dim C = \sum_{i=1}^n \dim C_{x_i} + 1.
\]
We conclude that $\dim C > \dim C_{x_i}\geq 0$, for all $i = 1,2,\ldots,n$.
By iterating the above argument along the orbit of $I$, we obtain that
this is finite. In fact, at each iteration, for a non-zero right module $C_k = F/I_k$
one has that the condition $T_{x_i}(I_k) = I_l$ implies that $\dim C_k > \dim C_l$.
\end{proof}

\begin{theorem}
Let $C = F/I$ be a non-zero regular monomial cyclic right module and consider
the corresponding reduced adjacency matrix $\bA_I$. Then, $C$ is
finite-dimensional if and only if $\bA_I$ is a nilpotent matrix.
\end{theorem}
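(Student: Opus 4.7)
The plan is to prove the two implications separately, exploiting the reduced linear system \eqref{redmateq} for the $(\Leftarrow)$ direction and combining Theorem~\ref{fdprop} with a walk-counting argument for the $(\Rightarrow)$ direction.

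For the easier direction, suppose $\bA_I$ is nilpotent with $\bA_I^m = 0$ for some $m\geq 1$. Then the formal identity
\[
(E_{r-1} - t\cdot \bA_I)\Bigl(\sum_{k=0}^{m-1} t^k\bA_I^k\Bigr) = E_{r-1}
\]
exhibits $E_{r-1} - t\cdot\bA_I$ as invertible over $\QQ[t]$, with polynomial inverse. Solving the reduced system \eqref{redmateq}, the unique solution $\bvH' = \bigl(\sum_{k=0}^{m-1} t^k\bA_I^k\bigr)\bC_I$ has all entries in $\QQ[t]$. Since $\HS'(C)$ is one of these entries, $C$ is finite-dimensional.

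For the converse, assume $\dim C < \infty$. Theorem~\ref{fdprop} applied to $C$ yields that the orbit $\O_I$ is finite and, for every pair $(k,l)$ with $1\leq k,l\leq r-1$ and every variable $x_i$ satisfying $T_{x_i}(I_k) = I_l$, one has the strict inequality $\dim C_k > \dim C_l$. I would then reinterpret the powers of $\bA_I$ combinatorially: since $\bA_I$ is obtained from $A_I$ by deleting the row and column corresponding to $\langle 1\rangle$, the $(k,l)$-entry of $\bA_I^m$ counts the number of walks $I_k = I_{k_0}\to I_{k_1}\to\cdots\to I_{k_m} = I_l$ of length $m$ such that each $I_{k_j}$ lies in $\bO_I$ and each step corresponds to some $T_{x_i}$. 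Along any such walk, Theorem~\ref{fdprop} forces the sequence $\dim C_{k_0},\dim C_{k_1},\ldots,\dim C_{k_m}$ to be strictly decreasing; moreover each $\dim C_{k_j}$ is a positive integer since $I_{k_j}\neq \langle 1\rangle$ implies $C_{k_j}\neq 0$. Hence $m+1 \leq \dim C_{k_0}\leq \dim C$, so no walk of length at least $\dim C$ exists and $\bA_I^m = 0$ for all $m\geq \dim C$.

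The main delicate point I anticipate is precisely the walk-counting step: one must be sure that restricting to $\bA_I$ (as opposed to $A_I$) is exactly what forbids the walk from ever entering the absorbing vertex $\langle 1\rangle$, so that the strict dimension inequality from Theorem~\ref{fdprop} applies uniformly at every edge traversed. Once this is carefully articulated, the rest of the argument is a bounded-descent chain on the positive integers.
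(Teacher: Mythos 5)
Your proof is correct and rests on the same two ingredients as the paper's: the Neumann series $\sum_{k=0}^{m-1} t^k\bA_I^k$ as a polynomial inverse of $E_{r-1}-t\cdot\bA_I$ for the easy implication, and Theorem~\ref{fdprop} for the converse. The only divergence is how the converse is concluded. You use a walk-counting argument: the $(k,l)$-entry of $\bA_I^m$ vanishes once $m\geq\dim C$, because a length-$m$ walk inside $\bO_I$ would force a strictly decreasing chain of $m+1$ positive dimensions all bounded by $\dim C$. The paper instead simply reorders the reduced orbit so that $\dim C_k>\dim C_l$ implies $k<l$; then Theorem~\ref{fdprop} makes $\bA_I$ strictly upper triangular, hence nilpotent, in one line. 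These are equivalent mechanisms (a nonnegative integer matrix is nilpotent iff its digraph is acyclic iff it can be permuted to strictly triangular form), and both handle your flagged concern correctly: deleting the $\langle 1\rangle$ row and column is exactly what ensures every edge of every walk satisfies the strict inequality of Theorem~\ref{fdprop}. The reordering phrasing is a bit more economical and has the side benefit, used later in the paper, of exhibiting $\bA_I$ as strictly upper triangular in the finite-dimensional and truncated cases, which makes the linear solve cheap.
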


\begin{proof}
By the arguments at the beginning of this section, it remains to prove the necessary
condition and hence let us assume that $\dim C < \infty$. By Theorem \ref{fdprop},
we have that $\dim C_k < \infty$ where $C_k = F/I_k$ and $\bO_I =
\{I_1,\ldots,I_{r-1}\}$. Assume now that the reduced orbit $\bO_I$ is ordered
according to the dimensions, namely $\dim C_k > \dim C_l$ implies that $k < l$.
Again by Theorem \ref{fdprop}, we obtain that the matrix $\bA_I$ is strictly upper
triangular and hence nilpotent.
\end{proof}

For the finite-dimensional case, we remark that the strictly upper triangular structure
of the reduced adjacency matrix makes the computation of the Hilbert series (actually,
polynomial) a fast one. In fact, in Section \ref{timings} we will present
some tests where this computation performs better than normal words enumeration.
This is of course true also for multigraded Hilbert series since $\bA_I$ is
a strictly upper triangular matrix if and only if so are the matrices $\bA_I^{(i)}$
($1\leq i\leq n$), where by definition $\bA_I = \bA_I^{(1)} + \cdots + \bA_I^{(n)}$.
Finally, it is important to mention that Ufnarovski \cite{Uf1} also provided
a graph theoretic characterization of the finite-dimensionality of a finitely
presented monomial algebra.

\medskip
We now consider the problem of computing some truncation of a (multigraded)
Hilbert series. In fact, solving this problem may be of interest if the sum
of the series is difficult to determine because, for instance, it is not a rational
function. Moreover, if an algebra $A = F/I$ is invariant under the action
of the general linear group $\GL_n(\KK)$, the multigraded Hilbert
series of $A$ is a symmetric function. Hence, one may want to decompose
this function as a sum of Schur (polynomial) functions because this provides
the $\GL_n(\KK)$-module structure of $A$. We will give more details about
this application of the truncation by studying a concrete example in Section
\ref{example}.

For the computation of Hilbert series, on account of Theorem \ref{macth}
and Proposition \ref{cycdec}, we are reduced to consider the monomial cyclic case.
Since the main applications concern algebras, let us consider a monomial algebra
$A = F/I$, that is, $I\subset F$ is a monomial two-sided ideal of $F$.
We assume here the standard multigrading for $F$ and $A$, in order to simplify
the notation. Consider the (monomial) two-sided ideal $B =
\langle x_1,\ldots,x_n \rangle$ and its power $B^{d+1}$ ($d\geq 0$) that is generated
by all monomials $w\in W$ such that $\tdeg(w) = d+1$. In other words, one has that
$B^{d+1} = \sum_{k\geq d+1} F_k$. We consider the finite-dimensional monomial algebra
$A^{(d)} = F/I^{(d)}$ where $I^{(d)} = I + B^{d+1}$ and we call this algebra
the {\em $d$-th truncation of $A$}. In fact, it is clear that $A^{(d)}$ is isomorphic
to the vector space $\bigoplus_{k\leq d} A_k$ and hence the multigraded Hilbert series
(in fact, polynomial) $\HS(A^{(d)})$ is the truncation at total degree $d$
of the multigraded Hilbert series $\HS(A)$, that is
\[
\HS(A^{(d)}) = \sum_{|\balpha|\leq d} \dim A_\balpha\, t_1^{\alpha_1}\cdots t_n^{\alpha_n}.
\]
Since the computation of the Hilbert series is based on the computation of the colon
right ideals, we now analize this operation for the two-sided ideals $I^{(d)}$.
We show that one can compute $T_w(I^{(d)}) = (I^{(d)} :_R w)$ ($w\in W$)
without having to deal with the $n^{d+1}$ monomial generators of the ideal $B^{d+1}$.

\begin{lemma}
\label{lem1}
Let $I,J$ be monomial two-sided ideals and $w\in W$. Then $(I + J :_R w) =
(I :_R w) + (J :_R w)$.
\end{lemma}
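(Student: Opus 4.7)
My plan is to prove the two inclusions separately, with the reverse inclusion being the one that actually uses the monomial hypothesis on $I$ and $J$.

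For the forward inclusion $(I :_R w) + (J :_R w) \subseteq (I + J :_R w)$, I would observe that this holds for any (not necessarily monomial) right ideals: if $f \in (I :_R w)$ then $w f \in I \subseteq I+J$, so $f \in (I+J :_R w)$, and similarly for $J$; since $(I+J :_R w)$ is itself a right ideal and therefore closed under addition, the sum of the two colon ideals lies inside it.

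For the reverse inclusion I would exploit that the sum of two monomial two-sided ideals is again a monomial two-sided ideal whose monomial basis is the set-theoretic union $(I \cap W) \cup (J \cap W)$. Consequently, a polynomial lies in $I + J$ if and only if every monomial appearing in it (with non-zero coefficient) lies in $I \cup J$. Given $f \in (I+J :_R w)$, write $f = \sum_k c_k m_k$ with $c_k \in \KK \setminus \{0\}$ and $m_k \in W$. Then $w f = \sum_k c_k (w m_k)$ and, since left-multiplication by the word $w$ is injective on monomials, the set of monomials of $wf$ is exactly $\{w m_k\}$. From $wf \in I + J$ we deduce $w m_k \in I \cup J$ for every $k$. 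Split the index set into $K_I = \{k : w m_k \in I\}$ and $K_J = \{k : w m_k \in J,\ w m_k \notin I\}$ (the choice on the overlap is arbitrary) and set $f_I = \sum_{k \in K_I} c_k m_k$ and $f_J = \sum_{k \in K_J} c_k m_k$. Then $w f_I \in I$ and $w f_J \in J$ because each summand is already in the respective monomial ideal, so $f_I \in (I :_R w)$, $f_J \in (J :_R w)$, and $f = f_I + f_J$.

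The only delicate point is the handling of monomials $m_k$ with $w m_k \in I \cap J$, but this causes no trouble because any fixed assignment of such monomials to one of the two pieces works; nothing in the argument requires uniqueness of the decomposition $f = f_I + f_J$. I do not anticipate any real obstacle beyond being explicit about the monomial-by-monomial splitting, which is the crucial place where the monomial hypothesis is used (for arbitrary two-sided ideals the reverse inclusion can fail).
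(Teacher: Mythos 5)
Your proof is correct, and it takes a genuinely different route from the paper's. The paper proves the lemma in two lines by invoking Proposition \ref{colbas}: given monomial bases $\{v_i\}$ of $I$ and $\{w_j\}$ of $J$, the set $\{v_i\}\cup\{w_j\}$ is a monomial basis of $I+J$, and applying the formula $(K :_R w) = \sum R(w,\cdot) + K$ to $I$, to $J$, and to $I+J$ makes the two sides of the identity literally coincide. You instead argue at the level of elements: you prove the easy inclusion $(I :_R w) + (J :_R w)\subseteq (I+J :_R w)$ for arbitrary right ideals, and for the reverse inclusion you use the two structural facts that a polynomial lies in a monomial ideal if and only if each of its monomials does, and that left multiplication by $w$ is injective on $W$, to split an arbitrary $f\in(I+J :_R w)$ into pieces $f_I$ and $f_J$. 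Your version is more self-contained (it does not depend on Proposition \ref{colbas}) and silently covers the degenerate situation $w\in I$ or $w\in J$, which the paper's argument technically excludes since Proposition \ref{colbas} is stated under the hypothesis $w\notin I$; the paper's version is shorter because it reuses machinery already in place. One tiny remark: you phrase the monomial-basis-of-the-sum claim as a step to be ``exploited,'' but what you actually need is only the one-directional statement that every monomial of an element of $I+J$ lies in $I\cup J$, which follows immediately from $I$ and $J$ being spanned by monomials; you do not need to identify the full minimal basis of $I+J$.
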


\begin{proof}
Consider the monomial basis $\{v_i\}\cup\{w_j\}$ of the ideal $I + J$, where
$\{v_i\}$ is a monomial basis of $I$ and $\{w_j\}$ is a monomial basis of $J$.
With the notation of Proposition \ref{colbas}, we immediately have that
\[
\begin{array}{l}
(I + J :_R w) = \sum_i R(w,v_i) + \sum_j R(w,w_j) + I + J \\
\quad = (I :_R w) + (J :_R w).
\end{array}
\]
\end{proof}

\begin{lemma}
\label{lem2}
If $d' = \tdeg(w)$ $(w\in W)$ then
\[
(B^d :_R w) =
\left\{
\begin{array}{cl}
\langle 1 \rangle & \mbox{if}\ d'\geq d, \\
B^{d - d'} & \mbox{otherwise}.
\end{array}
\right.
\]
\end{lemma}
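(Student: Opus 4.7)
The plan is to treat the two cases of the piecewise formula separately; in both, the argument reduces to a direct degree count on monomials, using Proposition \ref{colop} to guarantee that $(B^d :_R w)$ is itself a monomial right ideal so that testing membership on monomials suffices.

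First, suppose $d' \geq d$. Since, as a $\KK$-vector space, $B^d = \sum_{k\geq d} F_k$ and $w$ is a single monomial of length $d' \geq d$, we immediately have $w \in B^d$. Therefore $w\cdot 1 \in B^d$, i.e.\ $1 \in (B^d :_R w)$, which forces $(B^d :_R w) = \langle 1 \rangle$.

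Next, suppose $d' < d$. I would prove the two inclusions between $(B^d :_R w)$ and $B^{d-d'}$ after invoking Proposition \ref{colop} to reduce everything to monomials. For $B^{d-d'} \subseteq (B^d :_R w)$: any monomial $v$ with $\tdeg(v) \geq d-d'$ yields a monomial $wv$ of length $d'+\tdeg(v) \geq d$, so $wv \in B^d$ and hence $v \in (B^d :_R w)$. Conversely, any monomial $v \in (B^d :_R w)$ gives a monomial $wv$ lying in $B^d$; since $B^d$ is spanned by monomials of length at least $d$, this forces $\tdeg(wv) = d' + \tdeg(v) \geq d$, i.e.\ $\tdeg(v) \geq d - d'$, so that $v \in B^{d-d'}$.

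No real obstacle is anticipated here: once Proposition \ref{colop} supplies the monomial structure of the colon ideal, the remainder is the single identity $\tdeg(wv) = \tdeg(w) + \tdeg(v)$ combined with the explicit description of $B^d$ as the ideal spanned by monomials of length $\geq d$. If one prefers a more formal derivation, the same conclusion can be obtained by applying Proposition \ref{colbas} to the monomial two-sided basis of $B^d$ (all monomials of length exactly $d$) and classifying the overlap factorizations $u_{jk} w_j = w v_{jk}$ by the value of $\tdeg(u_{jk})$, but the direct degree argument above is shorter.
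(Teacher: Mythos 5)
Your proof is correct and follows essentially the same route as the paper's: handle $d' \geq d$ by noting $w \in B^d$, and handle $d' < d$ by a two-inclusion degree count on monomials. The only cosmetic difference is that you explicitly invoke Proposition \ref{colop} to justify reducing membership checks to monomials, a step the paper leaves implicit.
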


\begin{proof}
If $d'\geq d$ then clearly $w\in B^d$ and therefore $(B^d :_R w) = \langle 1\rangle$.
Assume now $d' < d$. For any $v\in W$ such that $v\in B^{d-d'}$, that is,
$\tdeg(v)\geq d - d'$ one has that $w v\in B^d$ and hence $v\in (B^d :_R w)$.
Conversely, if $v\in (B^d :_R w)$ ($v\in W$) then $w v\in B^d$ and therefore
$d' + \tdeg(v)\geq d$, that is, $v\in B^{d-d'}$.
\end{proof}

\begin{proposition}
\label{trunc}
Let $I\subset F$ be a two-sided monomial ideal and $w\in W$. Put $d' = \tdeg(w)$.
It holds that
\[
(I^{(d)} :_R w) =
\left\{
\begin{array}{cl}
\langle 1 \rangle & \mbox{if}\ d' > d, \\
(I :_R w)^{(d-d')} & \mbox{otherwise}.
\end{array}
\right.
\]
\end{proposition}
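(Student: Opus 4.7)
The plan is to reduce the claim to the two preceding lemmas by simply unwinding the definition of $I^{(d)}$. Since $I^{(d)} = I + B^{d+1}$, Lemma~\ref{lem1} gives
\[
(I^{(d)} :_R w) \;=\; (I + B^{d+1} :_R w) \;=\; (I :_R w) + (B^{d+1} :_R w),
\]
so everything hinges on evaluating the second summand via Lemma~\ref{lem2} with the exponent $d+1$ in place of $d$.

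Next I would split into the two cases of the statement. If $d' > d$, then $d' \geq d+1$, so by Lemma~\ref{lem2} the ideal $(B^{d+1} :_R w)$ equals $\langle 1\rangle$. Since $\langle 1\rangle$ absorbs any right ideal under the sum, we get $(I^{(d)} :_R w) = (I :_R w) + \langle 1\rangle = \langle 1\rangle$, which is the first branch of the formula.

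If instead $d' \leq d$, then $d' < d+1$, so Lemma~\ref{lem2} yields
\[
(B^{d+1} :_R w) \;=\; B^{(d+1)-d'} \;=\; B^{(d-d')+1}.
\]
Substituting this back into the decomposition gives
\[
(I^{(d)} :_R w) \;=\; (I :_R w) + B^{(d-d')+1},
\]
which is exactly $(I:_R w)^{(d-d')}$ by the definition of the truncation operator applied to the two-sided ideal $(I :_R w)$. (Note that $(I :_R w)$ is still a two-sided monomial ideal because $I$ is two-sided; this is implicit in the statement since the truncation notation $(\,\cdot\,)^{(d-d')}$ is only defined for such ideals.) This handles the second branch and completes the proof.

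There is no real obstacle here; the statement is essentially a direct combination of Lemmas~\ref{lem1} and \ref{lem2}. The only point requiring a moment of care is the off-by-one shift from $B^{d+1}$ to $B^{(d-d')+1}$, which must match the $(d-d')$-th truncation definition $J^{(e)} = J + B^{e+1}$; both the inequality $d' \leq d$ and the arithmetic $(d+1)-d' = (d-d')+1$ line up cleanly, so no further work is needed.
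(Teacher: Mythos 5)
Your argument is essentially the paper's own proof: decompose $(I^{(d)} :_R w)$ by Lemma~\ref{lem1}, evaluate $(B^{d+1} :_R w)$ by Lemma~\ref{lem2}, and observe $(d+1)-d' = (d-d')+1$. The only cosmetic difference is that the paper treats the case $d' > d$ by noting directly that $w\in B^{d+1}\subset I^{(d)}$ forces the colon to be $\langle 1\rangle$, whereas you route that case through Lemma~\ref{lem2} as well; both are fine.

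One parenthetical remark in your proof is incorrect and should be dropped: $(I :_R w)$ is in general \emph{not} a two-sided ideal when $I$ is two-sided. For instance, with $I = \langle xyx\rangle$ (two-sided) and $w = x$, one computes $(I :_R x) = \langle yx\rangle_R + I$, and $y\cdot yx = y^2x$ is not in this right ideal since $xy^2x\notin I$; so it fails to be closed under left multiplication. Fortunately this does not affect your proof: the truncation construction $J^{(e)} = J + B^{e+1}$ makes perfect sense for right ideals, and that is how the paper uses the notation here, so the conclusion $(I^{(d)} :_R w) = (I :_R w) + B^{(d-d')+1} = (I :_R w)^{(d-d')}$ stands without needing $(I :_R w)$ to be two-sided.
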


\begin{proof}
By assuming $d' > d$ we have that $w\in B^{d+1}$ and hence $w\in I^{(d)} = I + B^{d+1}$,
that is, $(I^{(d)} :_R w) = \langle 1\rangle$. Otherwise, if $d'\leq d$ then
from Lemmas \ref{lem1} and \ref{lem2} it follows that 
\[
\begin{array}{l}
(I^{(d)} :_R w) = (I :_R w) + (B^{d+1} :_R w) = (I :_R w) + B^{d-d'+1} \\
\quad = (I :_R w)^{(d-d')}.
\end{array}
\]
\end{proof}

The results above show that, to compute the finite reduced orbit $\bO_{I^{(d)}}$,
one has simply to compute (according to Proposition \ref{colbas}) the monomial
generators of the colon right ideals $(I :_R w)$ ($\tdeg(w) = d'\leq d$) up to
the total degree $d - d'$. In other words, the $n^{d+1}$ monomial generators
of $B^{d+1}$ are not involved at all in these computations. Moreover, solving
the corresponding matrix equation to obtain the Hilbert series is very efficient,
because we are in the finite-dimensional (strictly upper triangular) case.

\section{An illustrative example}
\label{example}

In this section, by means of a concrete example, we show how the proposed method
for multigraded Hilbert series can be applied to study in an effective way,
finitely generated algebras that are invariant under the action of the general
linear group. To begin with, we introduce some general notions and results about
the action of $G = \GL_n(\KK)$ on the free associative algebra $F = \KK\langle X\rangle$
where $X = \{x_1,\ldots,x_n\}$. For a complete reference we refer to the monographs
\cite{Dr,Fu}. Let $g = (g_{ij})$ be any matrix of the group $G$
and define the algebra automorphism $\rho_g:F\to F$ such that
$x_i\mapsto \sum_j g_{ij} x_j$. Clearly, $\rho_{gh} = \rho_h\rho_g$ for all
$g,h\in G$, that is, one has a right action of $G$ on $F$.
A subspace $V\subset F$ such that $\rho_g(V)\subset V$ for all $g\in G$
is called a {\em $G$-submodule of $F$}. The corresponding (anti)homomorphism
$G\to \Aut_\KK(V)$ is called a {\em polynomial representation of $G$}.
If $F = \bigoplus_d F_d$ is the decomposition of the algebra $F$ in its
homogeneous components, it is clear that each $F_d$ is a $G$-submodule
of $F$. By definition, a subspace $V\subset F$ is {\em graded}
if $V = \sum_d V_d$ where $V_d = V\cap F_d$. In a similar way, one defines
also {\em multigraded} subspaces. Clearly, for graded and multigraded
subspaces we can consider the corresponding Hilbert series as the generating
series of the dimensions of their homomogeneous and multihomogeneous components,
respectively. A $G$-submodule $V\subset F$ is said {\em simple} if there is
no $G$-submodule of $V$ other than 0 and $V$.

Assume now that $\char(\KK) = 0$. By the ``Vandermonde argument'' one has that
all $G$-submodules $V\subset F$ are in fact graded subspaces. Note that each
homogeneous component $V_d = V\cap F_d$ is clearly a $G$-submodule. By Schur's
theory on polynomial representations, all $G$-submodules of $F$ are {\em semisimple},
that is, they are direct sum of simple $G$-submodules. Moreover, a complete
set of (non-isomorphic) simple $G$-submodules $\{W^\lambda\}_\lambda$ is
parametrized by integer partitions $\lambda = (\lambda_1,\ldots,\lambda_k)$
such that $k\leq n$. By denoting $|\lambda| = \sum_i \lambda_i$, one has
in particular that the set $\{W^\lambda\}_{|\lambda|=d}$ occurs in
the decomposition of $F_d$. Each $W^\lambda$ with $|\lambda| = d$ is actually
a multigraded subspace of $F_d$ and one defines its multigraded
Hilbert series
\[
\S_\lambda = \HS(W^\lambda) =
\sum_{|\balpha|=d} \dim W^\lambda_\balpha\, t_1^{\alpha_1}\cdots t_n^{\alpha_n}.
\]
These polynomials are called {\em Schur functions}. Every $\S_\lambda$ is a
symmetric polynomial which can be computed by well-known formulas (see, for instance,
\cite{Fu}). It follows that any $G$-submodule $V\subset F$ is in fact multigraded
and its multigraded Hilbert series $\HS(V) =
\sum_{\balpha\in\NN^n} \dim V_\balpha\, t_1^{\alpha_1}\cdots t_n^{\alpha_n}$
is also a symmetric function which is a (possibly infinite) linear combination
of Schur functions. Precisely, let $V = \bigoplus_\lambda m_\lambda W^\lambda$
be the decomposition of $V$ in its simple $G$-submodules, where the integer
$m_\lambda\geq 0$ denotes the number of times (multiplicity) that a simple
$G$-submodule isomorphic to $W^\lambda$ occurs in $V$. Thus, for the multigraded
Hilbert series we clearly have that $\HS(V) = \sum_\lambda m_\lambda \S_\lambda$.
In other words, it is sufficient to decompose $\HS(V)$ in terms of Schur functions
to have a complete description of how the general linear group $G$ acts
on $V$. When the $G$-submodule $V\subset F$ is infinite-dimensional, the symmetric
function $\HS(V)$ may not be rational and even in the rational case one has
the problem that the Schur function decomposition is indeed a series.
There are some ``nice rational symmetric functions'' that allow the computation
of this decomposition in the infinite-dimensional case. We refer to \cite{BBDGK}
for details about such methods. When $V$ is finite-dimensional, and hence $\HS(V)$
is simply a symmetric polynomial, there are fast algorithms that compute
the Schur function decomposition. These procedures are implemented, for instance,
in the \maple package \textsf{SF} or in the C library \textsf{SYMMETRICA}
which are freely distributed over the Internet \cite{LA,St}. We will make use
of these algorithms to perform the Schur function decomposition in our example.

Consider now a finitely generated algebra $A = F/J$, where the two-sided ideal
$J\subset F$ is also a $G$-submodule. Clearly, the group $G$ acts on $A$ as well
and by the semisemplicity of $F$ we have that $A$ is isomorphic to
a $G$-submodule of $F$. Then, $A$ is multigraded and semisimple and the
multigraded Hilbert series $\HS(A)$ is a linear combination of Schur functions
according to the $G$-module structure of $A$. In particular, the symmetric function
$\HS(A)$ is a polynomial when $A$ is finite-dimensional.

An important class of algebras that are invariant under the action of the general
linear group are the following ones. Let $R$ be an associative algebra and consider
the two-sided ideal
\[
T(R) = \{f\in F\mid f(r_1,\ldots,r_n) = 0,\ \mbox{for all}\ r_1,\ldots,r_n\in R\}.
\]
It is clear that $T(R)\subset F$ is a {\em T-ideal}, that is, it is invariant under
all algebra endomorphisms of $F$ and in particular $T(R)$ is a $G$-submodule.
The finitely generated algebra $A = F/T(R)$ is called the {\em relatively free algebra
in $n$ variables that is defined by $R$}. It is known that the multigraded Hilbert
series of these algebras are rational symmetric functions and the computation of their
Schur function decomposition is very important in the study of a {\em PI-algebra},
that is, an algebra $R$ such that $T(R)\neq 0$. As a reference for PI-theory,
we suggest the books \cite{Dr,GZ}.

Let $E = E(V) = \bigwedge(V)$ be the Grassmann (or exterior) algebra over a vector
space $V$ of countable dimension. Moreover, denote $[f,g] = f g - g f$, for any
$f,g\in F$. For $\char(\KK) = 0$, Latyshev first proved in \cite{La} that
the two-sided ideal $T(E)\subset F$ is generated by the following set of polynomials
\[
[[x_i,x_j],x_k]\, ,\, [x_i,x_j][x_k,x_l] + [x_i,x_k][x_j,x_l]
\]
for all $x_i,x_j,x_k,x_l\in X$. In our example, we will consider the relatively
free algebra $A = F/T(E)$. Note that the $G$-module structure of $A$
was essentially obtained by Krakowski and Regev in \cite{KR}. However, we want to show
here how such a structure can be studied in an algorithmic way by combining our method
for multigraded Hilbert series with algorithms for computing the Schur function
decomposition. Our aim is both to illustrate our method and to suggest
that other interesting $G$-invariant algebras may be investigated in a similar way.
In fact, in Section \ref{timings} we will present more computations of this kind
as a test set.

Fix $n = 3$ and denote $X = \{x,y,z\}$. By Theorem \ref{macth}, a first
step to obtain the multigraded Hilbert series $\HS(A)$ consists in computing
the leading monomial ideal $I = \LM(T(E))$. With respect to the graded left
lexicographic ordering with $x\succ y\succ z$, the two-sided ideal $I$ is minimally
generated by the following infinite set of monomials
\begin{equation*}
\begin{gathered}
x^2y, x^2z, xy^2, xyz, xzy, xz^2, y^2z, yz^2, \\
xyxy, xyxz, xzxy, xzxz, yzyz, \\
yzy^dxy, yzy^dxz\ (d\geq 0).
\end{gathered}
\end{equation*}
This generating set was found in \cite{DLS}, where in fact a minimal \Gr\ basis
of $T(E)$ was given for any number of variables. Clearly, the set of monomials
$I\cap W$ ($W = X^*$) can be obtained by applying a finite number of rational
operations over finite languages, namely
\begin{equation*}
\begin{gathered}
I\cap W = 
W\cdot (\{x^2y, x^2z, xy^2, xyz, xzy, xz^2, y^2z, yz^2, xyxy, xyxz, \\
xzxy, xzxz, yzyz\} \cup \{yz\}\cdot \{y\}^*\cdot \{xy,xz\} )\cdot W.
\end{gathered}
\end{equation*}
By Definition \ref{reglangdef}, we have that $I\cap W$ is a regular language.
By Theorem \ref{reglang} we conclude that $I$ is a regular ideal, that is,
the orbit $\O_I$ is a finite set and therefore the multigraded Hilbert series
$\HS(A)$ is a rational function. Note that this function is also a symmetric one
because $T(E)$ is a $G$-submodule of $F$. Recall that to describe the orbit $\O_I$
and the linear equations relating all the series corresponding to the monomial right
ideals in $\O_I$, one has to compute the colon right ideals $I_w = (I :_R w)$
($w\in W$). Since $I$ is a two-sided ideal, one may use Proposition \ref{colbas}
for that purpose. For instance, to compute $I_x = (I :_R x)$ one considers
the following right ideals
\begin{equation*}
\begin{gathered}
R(x,x^2y) = \langle xy \rangle,
R(x,x^2z) = \langle xz \rangle,
R(x,xy^2) = \langle y^2 \rangle,
R(x,xyz) = \langle yz \rangle, \\
R(x,xzy) = \langle zy \rangle,
R(x,xz^2) = \langle z^2 \rangle,
R(x,y^2z) = 0,
R(x,yz^2) = 0, \\
R(x,xyxy) = \langle yxy \rangle,
R(x,xyxz) = \langle yxz \rangle,
R(x,xzxy) = \langle zxy \rangle, \\
R(x,xzxz) = \langle zxz \rangle,
R(x,yzyz) = 0, \\
R(x,yzy^dxy) = 0,
R(x,yzy^dxz) = 0 \ (d\geq 0).
\end{gathered}
\end{equation*}
We conclude that $I_x = \langle xy, xz, y^2, yz, zy, z^2, yxy, yxz, zxy, zxz \rangle + I$.
In a similar way, one obtains
\begin{equation*}
\begin{gathered}
R(y,x^2y) = 0,
R(y,x^2z) = 0,
R(y,xy^2) = 0,
R(y,xyz) = 0, \\
R(y,xzy) = 0,
R(y,xz^2) = 0,
R(y,y^2z) = \langle yz \rangle,
R(y,yz^2) = \langle z^2 \rangle, \\
R(y,xyxy) = 0,
R(y,xyxz) = 0,
R(y,xzxy) = 0, \\
R(y,xzxz) = 0,
R(y,yzyz) = \langle zyz \rangle, \\
R(y,yzy^dxy) = \langle zy^dxy \rangle,
R(y,yzy^dxz) = \langle zy^dxz \rangle\ (d\geq 0)
\end{gathered}
\end{equation*}
and hence $I_y = \langle yz, z^2, zyz \rangle + \langle zy^dxy, zy^dxz\mid d\geq 0 \rangle + I$.
Moreover, it holds immediately that $I_z = I$. By denoting $C_x = F/I_x, C_y = F/I_y$
the monomial cyclic right modules corresponding to the monomial right ideals
$I_x, I_y$, we obtain the first linear equation
\begin{equation}
\HS(A) = t_1 \HS(C_x) + t_2 \HS(C_y) + t_3 \HS(A) + 1.
\end{equation}
Since $I_x,I_y\in\O_I$, we now have to compute the right colon ideals
$I_{x^2},I_{xy},I_{xz}$ and $I_{yx},I_{y^2},I_{yz}$. The following equalities hold
\begin{equation*}
\begin{gathered}
R(x^2,x^2y) = \langle y, xy \rangle,
R(x^2,x^2z) = \langle z, xz \rangle,
R(x^2,xy^2) = \langle y^2 \rangle,
R(x^2,xyz) = \langle yz \rangle, \\
R(x^2,xzy) = \langle zy \rangle,
R(x^2,xz^2) = \langle z^2 \rangle,
R(x^2,y^2z) = 0,
R(x^2,yz^2) = 0, \\
R(x^2,xyxy) = \langle yxy \rangle,
R(x^2,xyxz) = \langle yxz \rangle,
R(x^2,xzxy) = \langle zxy \rangle, \\
R(x^2,xzxz) = \langle zxz \rangle,
R(x^2,yzyz) = 0, \\
R(x^2,yzy^dxy) = 0,
R(x^2,yzy^dxz) = 0\ (d\geq 0).
\end{gathered}
\end{equation*}
We have therefore that $I_{x^2} = \langle y, z, xy, xz \rangle + I$. In a similar way,
we compute that $I_{x^2} = I_{xy} = I_{xz}$ and one obtains the equation
\begin{equation}
\HS(C_x) = t_1 \HS(C_{x^2}) + t_2 \HS(C_{x^2}) + t_3 \HS(C_{x^2}) + 1.
\end{equation}
It is easy to check that $I_{yx} = I_x$ and $I_{y^2} = \langle z, yz \rangle + I$.
Moreover, we compute that
$I_{yz} = \langle z, yz \rangle + \langle y^dxy, y^dxz\mid d\geq 0 \rangle + I$
and one obtains the equation
\begin{equation}
\HS(C_y) = t_1 \HS(C_x) + t_2 \HS(C_{y^2}) + t_3 \HS(C_{yz}) + 1.
\end{equation}
Then, we have that $I_{x^2},I_{y^2},I_{yz}\in\O_I$ and one has to compute the corresponding
colon right ideals $I_{x^3},I_{x^2y},I_{x^2z},I_{y^2x},I_{y^3},I_{y^2z}$ and
$I_{yzx},I_{yzy},I_{yz^2}$. By similar computations, one obtains $I_{x^3} = I_{x^2}$.
Moreover, from $x^2y, x^2z\in I$ it follows immediately that $I_{x^2y},I_{x^2z} = \langle 1\rangle$.
A new equation is hence the following one
\begin{equation}
\HS(C_{x^2}) = t_1 \HS(C_{x^2}) + t_2 \HS(C_{x^2y}) + t_3 \HS(C_{x^2y}) + 1.
\end{equation}
We have the following identities $I_{y^2x} = I_x, I_{y^3} = I_{y^2}, I_{y^2z} = I_{x^2y}$
which imply the equation
\begin{equation}
\HS(C_{y^2}) = t_1 \HS(C_x) + t_2 \HS(C_{y^2}) + t_3 \HS(C_{x^2y}) + 1.
\end{equation}
One has also the identities $I_{yzx} = I_{x^2}, I_{yzy} = I_{yz}, I_{yz^2} = I_{x^2y}$
and the equation
\begin{equation}
\HS(C_{yz}) = t_1 \HS(C_{x^2}) + t_2 \HS(C_{yz}) + t_3 \HS(C_{x^2y}) + 1.
\end{equation}
Finally, we have that $I_{x^2y} = \langle 1 \rangle\in\O_I$ where clearly $\HS(C_{x^2y}) = 0$.
This can be also obtained by the obvious identities $I_{x^2yx} = I_{x^2y^2} = I_{x^2yz} =
I_{x^2y}$ and by the corresponding linear equation (with $c(I_{x^2y}) = 0$)
\begin{equation}
\HS(C_{x^2y}) = t_1 \HS(C_{x^2y}) + t_2 \HS(C_{x^2y}) + t_3 \HS(C_{x^2y}).
\end{equation}
We conclude that $\O_I = \{I,I_x,I_y,I_{x^2},I_{y^2},I_{yz},I_{x^2y}\}$. By solving
the (non-singular) system of the obtained linear equations, one computes the multigraded
Hilbert series of all monomial cyclic right modules corresponding to the elements
of the orbit $\O_I$. In particular, we obtain that
\[
\HS(A) = \frac{t_1t_2 + t_1t_3 + t_2t_3 + 1}{(1 - t_1)(1 - t_2)(1 - t_3)}. \\
\]
Note that this formula is a special case, for $n = 3$, of the general formula (see \cite{BR})
for the multigraded Hilbert series of the relatively free algebra in $n$ variables
of the Grassmann algebra, which is
\[
\frac{\displaystyle \prod_{i=1}^n(1 + t_i) + \prod_{i=1}^n(1 - t_i) }
{\displaystyle 2\cdot \prod_{i=1}^n(1 - t_i)}.
\]

We now consider the problem of understanding the Schur function decomposition
of $\HS(A)$. Recall that $A = F/T(E)$ and $I = \LM(T(E))$ and consider the corresponding
monomial algebra $A' = F/I$. Our previous computations have been made according to
Theorem \ref{macth} which yields that $\HS(A') = \HS(A)$. To obtain information about
the Schur function decomposition of $\HS(A)$, one possible approach consists
in truncating $A'$ at a sufficiently high total degree $d$ to see which decomposition
holds up to that degree. Precisely, with the notation of Section \ref{truncation},
we consider the finite-dimensional monomial algebra $A'^{(d)} = F/I^{(d)}$ which is
the $d$-th truncation of $A'$. We obtain that $\HS(A'^{(d)})$ is the truncation
at the total degree $d$ of the multigraded Hilbert series $\HS(A') = \HS(A)$
which implies that $\HS(A'^{(d)})$ is a symmetric polynomial. Then, the Schur function
decomposition of $\HS(A'^{(d)})$ can be obtained by means of efficient
algorithms and this decomposition corresponds to the $G$-module structure
of $A$ up to the fixed degree $d$.

To show how feasible this method could be for obtaining a sufficiently large truncation
of the series $\HS(A) = \sum_\lambda m_\lambda \S_\lambda$, in our example we fix
$d = 10$. In this case, the orbit $\O_{I^{(10)}}$ consists of 51 ideals and the
corresponding polynomial $\HS(A'^{(10)})$ has 286 monomials in the variables
$t_1,t_2,t_3$. This symmetric function is computed by our methods in 33 milliseconds
on a Linux server (see Section \ref{timings} for the specification of the machine).
Moreover, the Schur function decomposition of $\HS(A'^{(10)})$ takes 0.38 seconds in the
\magma interface of the library \textsf{SYMMETRICA}. These computations return
the decomposition
\[
\HS(A'^{(10)}) = \sum_{0\leq k\leq 10}
\sum_{
\begin{array}{c}
\scriptstyle
p+q = k, \\
\scriptstyle
0\leq q\leq 2
\end{array}
}
\S_{(p,1^q)}
\]
where $(p,1^q)$ reads $(p,1,\ldots,1)$ and we agree that $S_{(0,0)} = 1$.
This suggests the following formula, which was obtained in \cite{KR}
for an arbitrary number $n$ of variables
\[
\HS(A) = \sum_{k\geq 0}
\sum_{
\begin{array}{c}
\scriptstyle
p+q = k, \\
\scriptstyle
0\leq q\leq n-1
\end{array}
}
\S_{(p,1^q)}.
\]
Observe that, in this example, we have computed the rational form of the complete
Hilbert series $\HS(A)$ and therefore $\HS(A'^{(10)})$ could also be obtained
by means of a Taylor expansion. Nevertheless, since we are in the multivariate case,
even this task may be a non-trivial one (recall that $\HS(A'^{(10)})$ has 286 monomials).
Moreover, we remark that for a non-regular monomial algebra, the sum of the Hilbert series
may be completely unknown.

\section{Implementation and Timings}
\label{timings}

In this section, we provide the practical performance of the proposed algorithms
for the computation of graded and multigraded Hilbert series, both in the complete
and in the truncated case. All tests are performed by means of an implementation
that we have developed in the kernel of the computer algebra system \singular
\cite{DGPS}. Recall that the Hilbert series of any graded or multigraded algebra
$A = F/J$ ($F = \KK\langle x_1,\ldots,x_n \rangle$) is the same as the series
of the corresponding monomial algebra $A' = F/I$ where $I = \LM(J)$ is the leading
monomial ideal of $J$. Then, for computing the Hilbert series, our kernel
implementation requires as input a set of monomial generators of $I$.
Precisely, user has to compute a \Gr\ basis of $J$ with respect to some monomial
ordering and to input this basis into the procedure \textsf{nchilb}
which has been implemented in \singular{}'s interpreted language. This function
collects the leading monomials of the \Gr\ basis and converts them in a suitable
format for the kernel's code. See our \singular library \textsf{ncHilb.lib}
for detailed instructions to use it. Note that internally to the kernel,
the implementation uses indeed commutative analogues of noncommutative
monomial ideals according to the ``letterplace correspondence''.
For more details about the wide scope of letterplace methods we refer to
\cite{LSL1,LSL2,LS1,LS3}.

We display here the pseudo code of the proposed algorithm for multigraded algebras
that runs in the kernel.
\begin{algorithm}[H]
\caption{Multigraded Hilbert series algorithm}
\label{HSalgo}
\begin{algorithmic}[1]
\REQUIRE A basis of a monomial two-sided ideal $I\subset F$.
\ENSURE The multigraded Hilbert series of the monomial algebra $A = F/I$.
\STATE $\O_I:= \{I\}$, $\N:= \{I\}$
\STATE matrix $P = ( p_{ki} ):= 0$, column vector $\C_I = (c_k)^t:= 0$
\WHILE{$\N\neq\emptyset$}
\STATE choose $J\in \N$, $\N:=\N\setminus \{J\}$
\STATE $k:=$ position of $J$ in $\O_I$
\IF{$J\neq \langle 1 \rangle$}
\STATE $c_k = 1$ 
\ENDIF
\FOR{$1\leq i \leq n$}
\STATE compute the colon right ideal $J_{x_i}:= T_{x_i}(J)$
\IF{$ J_{x_i}\notin \O_I$}
\STATE $\N:= \N\cup \{J_{x_i}\}$, $\O_I:= \O_I\cup \{J_{x_i}\}$
\ENDIF
\STATE $p_{ki}:= \text{ position of }J_{x_i}\text{ in } \O_I$
\ENDFOR
\ENDWHILE
\STATE ($r\times r$)-matrix $M := 0$ ($r:=$ size of $\O_I$)
\STATE unit matrix $E_r$
\FOR{$1\leq k \leq r$}
\FOR{$1\leq i \leq n$}
\STATE $M[k,p_{ki}]:= M[k,p_{ki}] + t_i$
\ENDFOR
\ENDFOR
\RETURN $\H_1$, where $\vH = (\H_1,\ldots,\H_r)^t$ is a solution of the matrix equation
$(E_r-M)\vH = \C_I$ over the field $\QQ(t_1,\ldots,t_n)$. 
\end{algorithmic}

\end{algorithm}
If all variables $t_i$ are identified in Step 17 with a single variable $t$, then
the algorithm above computes the graded Hilbert series. User needs to provide
the optional parameter ``2'' for the multigraded Hilbert series since by default
the implementation returns the graded one.

\subsection{Computation of Hilbert series of infinitely generated ideals}
\label{variant}

If $I$ is a regular but infinitely generated monomial ideal, it is clear
that one can provide only the finite set of monomial generators of $I$
up to some fixed total degree $d$. We remark that this situation is different
from the notion of truncation of Section \ref{truncation} where, at least
formally (see Proposition \ref{trunc}), one inputs also all monomials of $F$
of degree $d+1$. Thus, for infinitely generated regular ideals, Algorithm \ref{HSalgo}
essentially guesses the sum of the Hilbert series whose correctness has
to be proved by the handling of regular expressions, as we have done
for the example in Section \ref{example}. In fact, one has a strong indication
of the correctness of the computer calculation when this rational function
stabilizes as the degree bound $d$ increases. To obtain correct guesses,
a modification of the Algorithm \ref{HSalgo} is actually required.
Observe that each time a colon right ideal operator $T_{x_i}$ ($1\leq i\leq n$)
is applied to $I$, one has a complete set of monomial generators
of $I_{x_i} = T_{x_i}(I)$ just up to the total degree $d-1$.
This is because a generator of $I_{x_i}$ of degree $d$ may arise from
a generator of $I$ of degree $d+1$ which has not been included in the input.
By iterating the operators $T_{x_i}$, one has that two right colon ideals
$I_w = T_w(I), I_{w'} = T_{w'}(I)$ ($w,w'\in W$) in the orbit of $I$ can be
only compared by means of their monomial generators up to degree $d - d'$,
where $d' = \max(\tdeg(w),\tdeg(w'))$. If $d$ is a suitable large bound, then
this trick usually provides correct comparisons and hence correct Hilbert series.
To access the variant of the Algorithm \ref{HSalgo} for infinitely generated
ideals, user has to provide an optional parameter ``1'' along with the input.

\subsection{Computation of affine Hilbert series}
There are many interesting (noncommutative) algebras $A = F/J$ which are not graded
ones. Consider, for instance, the group algebras of finitely generated groups.
For these algebras, one still has the notion of affine Hilbert function and affine
Hilbert series which are also called ``growth function and growth series''.
Precisely, consider $F_{\leq d} = \sum_{0\leq k\leq d} F_d$ the subspace of $F$
of all polynomials of total degree $\leq d$. Moreover, denote $I_{\leq d} =
I\cap F_{\leq d}$ and $A_{\leq d} = F_{\leq d}/I_{\leq d}$.
The {\em affine Hilbert function} $\HF_{a}'(A)$ is defined by putting,
for all $d\geq 0$,
\[
\HF'_a(A)(d) = \dim A_{\leq d}.
\]
The corresponding generating series $\HS'_a(A)$ is called the {\em affine Hilbert series}.
Let $\prec$ be a graded monomial ordering of $F$, that is, $\tdeg(w) < \tdeg(w')$
($w,w'\in W$) implies that $w\prec w'$. As usual, one considers the corresponding
monomial algebra $A' = A/I$ where $I = \LM(J)$. By similar arguments to the ones
of Theorem \ref{macth}, one proves that the algebras $A$ and $A'$ share the same
affine Hilbert function and series (see \cite{LS2}). Moreover, we immediately have that
\[
\HS'_a(A') = \sum_{d\geq 0} \left(\sum_{k=0}^d \HF'(A')(k)\right) t^d =
\left(\sum_{k\geq 0}t^k \right)\left(\sum_{d\geq 0}\HF'(A')(d) t^d \right)
\]
and hence $\HS_{a}'(A) = \HS'(A')/(1-t)$. Thus, one can easily obtain the affine Hilbert
series of $A$ by computing the graded Hilbert series of $A'$. For the non-homogeneous
test cases, we provide the computational timings of $\HS'(A')$.  

To show the performance of our implementation, we have carried out the computations on
a Dell PowerEdge R720 with two Intel(R) Xeon(R) CPU E5-2690  @ 2.90GHz, 20 MB Cache,
16 Cores, 32 Threads, 192 GB RAM with a Linux operating system (Gentoo). Besides the
experimental implementation in \cite{LS2}, this is the first implementation
in the kernel of a computer algebra system performing the computation of Hilbert
series of noncommutative algebras in general. To test the performance of our algorithms
and their implementations, we provide data for graded and multigraded Hilbert series,
together with their truncations, for various examples.

In the tables below, we abbreviate milliseconds, seconds, minutes as ms, s, m,
respectively. The symbol $\infty$ indicates that the computation has not been finished
within $1$ hour. The computing times for graded and multigraded Hilbert series are
indicated by $\HS$ and $\mHS$, respectively. We denote by $\#\O_I$ the cardinality
of the orbit $\O_I$. Moreover, we let $\O_I = \{T_{w_1}(I),\ldots,T_{w_r}(I)\}$
($w_i\in W$), where we assume that if $T_{w_i}(I) = T_{w'_i}(I)$ then
$\tdeg(w_i)\leq \tdeg(w'_i)$. We indicate by max$\{\vert w\vert\}$ the maximal
total degree of the words $w_1,\ldots,w_r$ and by $\Sol$ and $\mSol$ the cpu timings
for solving the linear systems over the rational functions fields involved in the
computation of graded and multigraded series, respectively. Since the complexity
of a computation depends also on the cardinality of a (minimal) monomial basis
of $I$ and the maximum total degree in it, we provide these details, as well,
for some examples and we denote them by $\#I$ and $deg(I)$, respectively.
The base field $\KK$ is always assumed to be the field of rational numbers.
  
\subsection{Tests of affine Hilbert series}

\begin{example}
Here we give the computational details for some classes of non-graded algebras.
\begin{compactenum} 
\item Consider the following Coxeter matrices
\[
C_1 =
\begin{bmatrix}
1 & 3 & 3 & 2 & 2 \\
3 & 1 & 3 & 2 & 2\\
3 & 3 & 1 & 3 & 3 \\
2 & 2 & 3 & 1 & 3\\
2 & 2 & 3 & 3 & 1\\
\end{bmatrix}
\,,\,
C_2 =
\begin{bmatrix}
1 & 2 & 3 & 2 & 3 \\
2 & 1 & 3 & 2 & 2\\
3 & 3 & 1 & 3 & 3 \\
2 & 2 & 3 & 1 & 3\\
3 & 2 & 3 & 3 & 1\\
\end{bmatrix}
\]
For a parameter $\delta \neq 0$, we consider the following two-sided ideals of
the free associative algebra $F = \KK\langle x_1,\ldots, x_5 \rangle$
\begin{align*}
J_1 &=\langle (x_{j}-\delta)(x_{j}+1)\,  (1\leq j\leq 5), 
x_1x_2x_1-x_2x_1x_2,
x_1x_3x_1-x_3x_1x_3,
\\&\quad \quad x_1x_4  -x_4x_1,
x_1x_5-x_5x_1,
x_2x_3x_2-x_3x_2x_3,
x_2x_4-x_4x_2,
x_2x_5 \\&\quad \quad -x_5x_2,
x_3x_4x_3-x_4x_3x_4,
x_3x_5x_3-x_5x_3x_5,
x_4x_5x_4-x_5x_4x_5 \rangle ,
\\[4 mm]
J_2 &=\langle (x_{j}-\delta)(x_{j}+1)\,  (1\leq j\leq 5), 
x_1x_2-x_2x_1,
x_1x_3x_1-x_3x_1x_3,
x_1x_4 \\&\quad \quad -x_4x_1,
x_1x_5-x_5x_1,
x_2x_3x_2-x_3x_2x_3,
x_2x_4-x_4x_2,
x_2x_5-x_5x_2,
\\&\quad \quad x_3x_4x_3  -x_4x_3x_4,
x_3x_5x_3-x_5x_3x_5,
x_4x_5x_4-x_5x_4x_5 \rangle.
\end{align*}
Then, the quotient algebras $\text{HA}_1 = F/J_1, \text{HA}_2 = F/J_2$
are by definition the Hecke algebras corresponding to the matrices $C_1, C_2$,
respectively.
\item Again, let us consider the following Coxeter matrices
\[
C_3 =
\begin{bmatrix}
1 & 2 & 2 & 3 & 3 \\
2 & 1 & 3 & 2 & 2\\
2 & 3 & 1 & 3 & 3 \\
3 & 2 & 3 & 1 & 3\\
3 & 2 & 3 & 3 & 1\\
\end{bmatrix}
\,,\,
C_4 =
\begin{bmatrix}
1 & 2 & 2 & 3 & 3 \\
2 & 1 & 2 & 3 & 2\\
2 & 2 & 1 & 3 & 3 \\
3 & 3 & 3 & 1 & 3\\
3 & 2 & 3 & 3 & 1\\
\end{bmatrix}
\]
and the following two-sided ideals of $F$ 
\begin{align*}
J_3 &= \langle (x_{i}x_{j})^{C_3[i,j]}-1\, (1\leq i,j\leq 5) \rangle, \\ 
J_4 &= \langle (x_{i}x_{j})^{C_4[i,j]}-1\, (1\leq i,j\leq 5) \rangle.
\end{align*}
The quotient algebras $\text{CA}_3 = F /J_3, \text{CA}_4 = F/J_4$
are the group algebras of the Coxeter groups defined by the matrices $C_3, C_4$,
respectively.
 
\item Let $F_n = \KK\langle x_1,\ldots,x_{n-1}\rangle$ be the free associative algebra
in $n-1$ variables. For a scalar $\lambda\neq 0$, consider the two-sided ideal
$L_n\subset F_n$ generated by the following relations 
\[
 x_{i}^2         = \lambda x_{i}~ \forall  i, ~~
 x_{i}x_{j}      = x_{j}x_{i} \text{ if } \vert j-i\vert > 1, ~~ 
 x_{i}x_{j}x_{i} = \lambda x_{i} \text{ if } \vert j-i\vert=1. 
\]
Then, the quotient algebra $\text{TL}_n = F_n/L_n$ is by definition a
Temperley-Lieb algebra. We consider $\text{TL}_{11}$ and $\text{TL}_{12}$
for the computation. These are finite dimensional algebras. 
\end{compactenum}
\begin{center}
\emph{
\begin{tabular}{|c|c|c|c|c|c|c|}
\hline
Tests& $\HS$& $\#\O_I$&$\text{max}\{\vert w\vert\}$&$\Sol$& $\#$I & deg(I)\\
\hline
\hline
$\text{HA}_1$ & $\unit[263]{ms} $  &$66$ &$10$ & $\unit[20]{ms}$ & $80$&$14$\\
$\text{HA}_2$ & $\unit[696]{ms} $  &$87$ &$13$ & $\unit[69]{ms}$ &$121$&$17$\\
\hline
$\text{CA}_3$ & $\unit[1.05]{s}$  &$88$ &$12$  & $\unit[64]{ms}$ &$155$&$19$\\
$\text{CA}_4$ & $\unit[1.10]{s}$  &$92$ &$13$  & $\unit[116]{ms}$ &$147$&$21$\\ 
\hline
$\text{TL}_{11}$ & $\unit[3.04]{s}$ & $213$ & $9$ & $\unit[25]{ms}$&$136$  &$11$\\
$\text{TL}_{12}$ & $\unit[7.03]{s}$ & $278$ & $10$ & $\unit[47]{ms}$&$166$  &$12$\\
\hline
\end{tabular}
}
\end{center}
\end{example}

\subsection{Tests of multigraded Hilbert series}
\label{gl_n-tables}

\begin{example}[Relative free algebras] In these examples, we provide the data for
the graded and multigraded Hilbert series of the following $\GL_n(\KK)$-invariant algebras.
\begin{compactenum}
\item The relatively free algebra in $n$ variables that is defined by the Grassmann
(or exterior) algebra $E$. This example has been described in full details in Section
\ref{example}. We denote by {\em rf$\_$ext}n this relatively free algebra.

\item The relatively free algebra in $n$ variables corresponding to the algebra
$\text{UT}_2(\KK)$ of 2 by 2 upper triangular matrices. General results for
$\text{UT}_m(\KK)$, which were obtained by Maltsev \cite{Ma}, imply that
the T-ideal of the polynomial identities satified by $\text{UT}_2(\KK)$
is generated, as a two-sided ideal of $F = \KK\langle x_1,\ldots,x_n\rangle$,
by the following infinite basis
\[
[x_i,x_j] w [x_k,x_l]
\]
where $x_i,x_j,x_k,x_l$ are any variables and $w$ is any word. In the examples 
under consideration, we assume that $w$ is also a variable, that is, we input
the generators of $T(\text{UT}_2(\KK))$ up to the total degree 5. In fact, this is
enough to obtain the correct multigraded Hilbert series by means of the variant
of the Algorithm \ref{HSalgo} described in Subsection \ref{variant}.
We denote this test set as {\em rf$\_$tri}n. For the reader's convenience,
we recall that the formula for the multigraded Hilbert series of the relatively
free algebra in $n$ variables of $\text{UT}_m(\KK)$ is the following one (see \cite{BD})
\[
\sum_{j=1}^{m} \binom{m}{j}
\left(\prod_{i=1}^{n}\frac{1}{1-t_i}\right)^j\left(t_1+\cdots+t_n-1\right)^{j-1}.
\]

\end{compactenum}
\begin{center}
\emph{
\begin{tabular}{|c|c|c|c|c|c|c|c|}
\hline
Tests& $\HS$ & $\mHS$ & $\#\O_I$&$\text{max}\{\vert w\vert\}$& $\mSol$& $\#$I & deg(I)\\
\hline
\hline
rf$\_$ext6 & $\unit[441] {ms}$ & $\unit[654]{ms}$ & $28$ & $4$ & $\unit[215]{ms}$&$715$&$7$\\
rf$\_$ext7 & $\unit[2.03] {s}$ & $\unit[4.36]{s}$ & $39$ & $4$ & $\unit[2.33]{s}$&$1675$&$7$ \\
\hline
rf$\_$tri6 & $\unit[278] {ms}$ & $\unit[1.39] {s}$ &$13$ & $4$& $\unit[1.11] {s}$  &$981$&$5$\\
rf$\_$tri7 & $\unit[903] {ms}$ & $\unit[18.17] {s}$ &$15$ & $4$& $\unit[17.26] {s}$  &$2079$&$5$\\
\hline
\end{tabular}
%\end{table}
}
\end{center}
We remark that the obtained multigraded Hilbert series are rational symmetric functions.

\end{example}
\begin{example}[Universal enveloping algebras] We consider the following algebras.
\begin{compactenum}
\item Let $I\subset F=\KK\langle x_1, \ldots,x_n \rangle$ be the two-sided ideal that
is generated by all the commutators $[x_{i_{1}},\ldots,x_{i_{d}}]$ of length $d$.
We have that $F/I$ is the universal enveloping algebra of the free nilpotent of class
$d-1$ Lie algebra with $n$ generators. This is clearly a $\GL_n(\KK)$-invariant algebra
and hence its multigraded Hilbert series is a symmetric function. We denote this algebra
by n{\em unil}d when $F$ is generated by $n$ variables and $I$ is generated by $d$-length
commutators.
\item Another $\GL_n(\KK)$-invariant two-sided ideal $J\subset F$ is generated by
all commutators $[ [x_{i_{1}},x_{i_{2}}], [x_{i_{3}},x_{i_{4}}] ]$ where
$1\leq i_1,i_2,i_3,i_4\leq n$. This defines $F/J$ as the universal enveloping algebra
of the free metabelian Lie algebra with $n$ generators. We denote this example as
{\em umeta}n for the $n$ variables.  
\end{compactenum}
\begin{center}
\emph{
\begin{tabular}{|c|c|c|c|c|c|c|c|}
\hline
Tests& $\HS$ & $\mHS$ & $\#\O_I$&$\text{max}\{\vert w\vert\}$&$\mSol$& $\#$I & deg(I)\\
\hline
\hline
4unil4 & $\unit[345]{ms}$ & $\infty$  &   $77$    & $5$& $\infty$   &$110$&$6$\\
5unil3 & $\unit[20]{ms}$& $\unit[1.84]{m}$    & $26$   & $3$& $\unit[1.84]{m}$ & $50$&$4$\\
\hline
umeta3 & $\unit[<1]{ms}$& $\unit[19]{ms}$  & $7$    & $4$ & $\unit[18]{ms}$  &$3$&$4$\\
umeta4 & $\unit[4]{ms}$& $\unit[3.49]{s}$  & $15$    & $4$& $\unit[3.49]{s}$ &$15$&$4$\\
\hline
\end{tabular}
}
\end{center} 

\end{example}
It can be observed from the two tables above that for multigraded Hilbert series
most of the computing time is spent in solving the linear system. Maybe this is a side effect
of the implementation of multivariate rational function fields in \singular.

\subsection{Tests of truncated Hilbert series}

As we have explained in Section \ref{example}, the motivation to develop an algorithm
to compute truncated Hilbert series begins with $\GL_n(\KK)$-invariant algebras and the necessity
to obtain the Schur function decomposition of the symmetric function which is the sum of the
multigraded series. Since it is difficult to obtain this decomposition when this symmetric
function is not a polynomial, we may decide to truncate the Hilbert series at a sufficiently
high degree and compute the corresponding partial decomposition in order to guess the complete
one. Of course, one may obtain the truncation of a rational Hilbert series as its Taylor
approximation up to some fixed degree but sometimes computing such a series may not be feasible,
mainly because of two reasons:
\begin{compactenum}
\item Fairly often, computing a \Gr\ basis up to an appropriate high degree required
to obtain the correct Hilbert series when the corresponding leading monomial ideal
is an infinitely generated regular ideal, is either too costly or not feasible. Even in the
case of finitely generated leading monomial ideals, computing \Gr\ bases may not be efficient.
\item Solving a linear system in a multivariate rational function field (with rational coefficients)
may also be unfeasible.
\end{compactenum}
In these cases, our implementation of the algorithm for truncated algebras presented in
Section \ref{truncation}, offers a feasible computation of the truncated Hilbert series.
We remark that it computes also such a truncation for non-regular monomial algebras, where
the sum of the complete Hilbert series cannot be obtained, up to now, by general methods.
User needs to provide an optional parameter ``$d+1$''
to the implementation for obtaining the truncated Hilbert series up to the total degree $d$.
If no such parameter is given to \textsf{nchilb}, it computes the complete Hilbert series.

Furthermore, we notice that the main difference of timings between the graded and
multigraded computation is due to the complexity of solving a linear system over multivariate
versus univariate function fields. The timings do not differ much for finite dimensional
algebras and the truncated case. In fact, for these cases the reduced adjacency matrix
is strictly upper triangular and the corresponding linear system is therefore easy to solve.

Now, we provide data of the computation of truncated Hilbert series of the
$\GL_n(\KK)$-invariant algebras that we have considered in Subsection \ref{gl_n-tables}.
We compare our truncation algorithm with an available implementation \textsf{lpHilbert}
in \singular (see library \textsf{fpadim.lib}) that computes truncated graded Hilbert
series by means of normal words enumeration. In the following tables, $\tdeg$ denotes
the truncation degree and $\#\mHS$ indicates the number of monomials in the corresponding
truncated multigraded Hilbert series.

\begin{center}
\begin{tabular}{|c|c|c|c|c|c|c|c|}
\hline
Tests & $\tdeg$ & $\lph$ & $\HS$ & $\mHS$ & $\#\O_I$ & $\mSol$ &$\#\mHS$\\  
\hline
\hline
rf$\_$ext6 &  $7$ & $\unit[56.19]{s}$ & $\unit[3.04]{s}$ & $\unit[3.05]{s}$ & $108$& $\unit[7]{ms}$&$1716$\\
rf$\_$ext7 &  $7$ & $\unit[5.08]{m}$ & $\unit[11.79]{s}$ & $\unit[11.81]{s}$ & $143$& $\unit[15]{ms}$&$3432 $\\
\hline
rf$\_$tri6  & 7  &$\unit[58.13]{m}$   & $\unit[21.19]{s}$  & $\unit[21.20]{s}$  &  $53$ & $\unit[3]{ms}$  & $1716$  \\
rf$\_$tri7  & 7  &$\infty$   & $\unit[1.53]{m}$  & $\unit[1.53]{m}$  &  $61$ & $\unit[6]{ms}$  & $3432$  \\
\hline
5unil5  & $9$&$\infty$& $\unit[1.62]{m}$   & $\unit[1.62]{m}$  & $1270$ & $\unit[511]{ms}$  & $2002$  \\
6unil6  & $7$&$\infty$& $\unit[12.27]{m}$   & $\unit[12.07]{m}$  & $ 492$& $\unit[14]{ms}$  & $1716$  \\
\hline
umeta6 & $10$ & $\infty$ & $\unit[13.41]{s}$ & $\unit[13.49]{s}$ & $365$ & $\unit[90]{ms}$ & $8008$\\
umeta7 & $10$ & $\infty$ & $\unit[2.93]{m}$ & $\unit[2.94]{m}$ & $610$& $\unit[340]{ms}$ & $19448$\\
\hline
\end{tabular}
\end{center}

By comparing the table above with the tables of Subsection \ref{gl_n-tables}, one can
observe that the computation of the complete Hilbert series may sometimes be faster
than computing a truncated one, as for the examples rf$\_$ext$n$ and rf$\_$tri$n$.
In many other cases, one has the opposite situation, as for the examples $n$unil$d$
and umeta$n$, where many complete multigraded Hilbert series cannot be computed
in a reasonable time.

We conclude with the beatiful picture of the Schur function decomposition of one of the
truncated (multigraded) Hilbert series that we have computed.
The following decomposition of $\HS(A^{(10)})$ for $A=$ {\em umeta6} takes 5 minutes
using \magma\\
\begin{small}
$
1
 +
~\S[1]
 +
~\S[1,1] + ~\S[2]
 +
~\S[1,1,1] + 2~\S[2,1] + ~\S[3]
 +
~\S[1,1,1,1] + 2~\S[2,1,1] + 2~\S[2,2] + 3~\S[3,1] + ~\S[4]
 +
~\S[1,1,1,1,1] + 2~\S[2,1,1,1] + 3~\S[2,2,1] + 4~\S[3,1,1] + 5~\S[3,2] + 
4~\S[4,1] + ~\S[5]
+
~\S[1,1,1,1,1,1] + 2~\S[2,1,1,1,1] + 3~\S[2,2,1,1] + 3~\S[2,2,2] + 5~\S[3,1,1,1] 
+ 10~\S[3,2,1] + 5~\S[3,3] + 7~\S[4,1,1] + 9~\S[4,2] + 5~\S[5,1] + ~\S[6]
+
2~\S[2,1,1,1,1,1] + 3~\S[2,2,1,1,1] + 4~\S[2,2,2,1] + 5~\S[3,1,1,1,1] + 
13~\S[3,2,1,1] + 11~\S[3,2,2] + 13~\S[3,3,1] + 10~\S[4,1,1,1] + 23~\S[4,2,1] + 
14~\S[4,3] + 11~\S[5,1,1] + 14~\S[5,2] + 6~\S[6,1] + ~\S[7]
+
3~\S[2,2,1,1,1,1] + 4~\S[2,2,2,1,1] + 3~\S[2,2,2,2] + 5~\S[3,1,1,1,1,1] + 
14~\S[3,2,1,1,1] + 20~\S[3,2,2,1] + 20~\S[3,3,1,1] + 21~\S[3,3,2] + 
11~\S[4,1,1,1,1] + 37~\S[4,2,1,1] + 30~\S[4,2,2] + 45~\S[4,3,1] + 14~\S[4,4] + 
18~\S[5,1,1,1] + 44~\S[5,2,1] + 28~\S[5,3] + 16~\S[6,1,1] + 20~\S[6,2] + 
7~\S[7,1] + ~\S[8]
+
4~\S[2,2,2,1,1,1] + 4~\S[2,2,2,2,1] + 14~\S[3,2,1,1,1,1] + 25~\S[3,2,2,1,1] + 
17~\S[3,2,2,2] + 24~\S[3,3,1,1,1] + 48~\S[3,3,2,1] + 19~\S[3,3,3] + 
12~\S[4,1,1,1,1,1] + 46~\S[4,2,1,1,1] + 70~\S[4,2,2,1] + 84~\S[4,3,1,1] + 
86~\S[4,3,2] + 54~\S[4,4,1] + 23~\S[5,1,1,1,1] + 84~\S[5,2,1,1] + \\
67~\S[5,2,2] +
108~\S[5,3,1] + 42~\S[5,4] + 30~\S[6,1,1,1] + 75~\S[6,2,1] + 48~\S[6,3] + 
22~\S[7,1,1] + 27~\S[7,2] + 8~\S[8,1] + ~\S[9]
+
4~\S[2,2,2,2,1,1] + 3~\S[2,2,2,2,2] + 26~\S[3,2,2,1,1,1] + 30~\S[3,2,2,2,1] + 
25~\S[3,3,1,1,1,1] + 69~\S[3,3,2,1,1] + \\
50~\S[3,3,2,2] + 55~\S[3,3,3,1] + 
50~\S[4,2,1,1,1,1] + 103~\S[4,2,2,1,1] + 69~\S[4,2,2,2] + 117~\S[4,3,1,1,1] + 
238~\S[4,3,2,1] + 94~\S[4,3,3] + 117~\S[4,4,1,1] + 127~\S[4,4,2] + 
27~\S[5,1,1,1,1,1] + 120~\S[5,2,1,1,1] + 186~\S[5,2,2,1] + 237~\S[5,3,1,1] + 
238~\S[5,3,2] + 190~\S[5,4,1] + 42~\S[5,5] + 44~\S[6,1,1,1,1] + 166~\S[6,2,1,1] 
+ 131~\S[6,2,2] + \\
217~\S[6,3,1] + 90~\S[6,4] + 47~\S[7,1,1,1] + 118~\S[7,2,1] + 
75~\S[7,3] + 29~\S[8,1,1] + 35~\S[8,2] + 9~\S[9,1] + ~\S[10]
$.
\end{small}

\begin{remark} Note that all timings that have been presented so far, display the computing
time for the system command \textsf{nc$\_$hilb} of \singular. This can be used directly (without
\textsf{nchilb}) if a finite set of monomials is provided in the letterplace format as an input.    
\end{remark}

\section{Conclusions and further directions}
\label{conclusion}

We believe that the two previous sections clearly show the power and
flexibility of the proposed approach to the computation of Hilbert series
for noncommutative structures. We plan to further extend our implementation
from the case of algebras to the case of (finitely generated) right modules
according to Theorem \ref{macth} and Proposition \ref{cycdec}. Researchers
who have interest also in representation theory may obtain essential information
about the decomposition of a $\GL_n(\KK)$-invariant algebra in terms of its
simple $\GL_n(\KK)$-submodules by combining our algorithms with procedures
performing the Schur function decomposition. Moreover, with fast approximations
of non-rational Hilbert series at hand, one can venture to enter
the intriguing realm of such functions. Finally, note that the iterative design
of our algorithms is immediately applicable also to the commutative case and
automata theory provides optimality in the number of iterations. Therefore,
we suggest to develop commutative variants of the proposed methods and to compare
them with the many existing implementations of commutative Hilbert series.

\section{Acknowledgements}
The authors would like to express their gratitude to the \singular team for
allowing full and guided access to the kernel of system. In particular,
we want to thank Hans Sch\"onemann for his cooperation on the implementation
of the algorithms. We would also like to thank Claus Fieker for fruitful discussions
and his assistance in using \magma. Finally, the authors gratefully acknowledge
Monica Lazzo and the anonymous reviewers for the careful reading of our manuscript and the valuable comments.

\end{document}